\def \ba {\begin {eqnarray*} }
\def \ea {\end {eqnarray*} }
\def \beq {\begin {eqnarray}}
\def \eeq {\end {eqnarray}}
\def \p {\partial}
\newcommand{\mR}{\mathbb{R}}                    % Formatting for R
\newcommand{\mC}{\mathbb{C}}                    % Formatting for C
\newcommand{\R}{\mathbb{R}}                    % Formatting for R
\newcommand{\abs}[1]{\lvert #1 \rvert}          % Formatting for the absolute value
\newcommand{\norm}[1]{\lVert #1 \rVert}         % Formatting for the norm
\newcommand{\eps}{\varepsilon}
\newcommand{\phg}{\text{phg}}
\newcommand{\supp}{\mathrm{supp}}
\theoremstyle{definition}
\newtheorem{thm}{Theorem}[section]
\newtheorem{prop}[thm]{Proposition}
\newtheorem{cor}[thm]{Corollary}
\newtheorem{lemma}[thm]{Lemma}
\newtheorem{question}{Question}[section]
\newtheorem{definition}{Definition}
\newtheorem*{remark}{Remark}
\numberwithin{equation}{section}
\title[The linearized Calder\'on problem]{The linearized Calder\'on problem in transversally anisotropic geometries}
\author{David Dos Santos Ferreira}
\address{Institut \'Elie Cartan, UMR 7502, Universit\'e de Lorraine, CNRS, INRIA, Campus des Aiguillettes B.P. 70239, F-54506 Vandoeuvre-l\`es-Nancy, France}
\email{ddsf@math.cnrs.fr}
\author{Yaroslav Kurylev}
\address{University College London}
\email{y.kurylev@ucl.ac.uk}
\author{Matti Lassas}
\address{Department of Mathematics and Statistics, University of Helsinki}
\email{matti.lassas@helsinki.fi}
\author{Tony Liimatainen}
\address{Department of Mathematics and Statistics, University of Helsinki}
\email{tony.liimatainen@helsinki.fi}
\author[M. Salo]{Mikko Salo}
\address{University of Jyvaskyla, Department of Mathematics and Statistics, PO Box 35, 40014 University of Jyvaskyla, Finland}
\email{mikko.j.salo@jyu.fi}
\date{\today}
\begin{document}

\begin{abstract}
In this article we study the linearized anisotropic Calder\'on problem. In a compact manifold with boundary, this problem amounts to showing that products of harmonic functions form a complete set. Assuming that the manifold is transversally anisotropic, we show that the boundary measurements determine an FBI type transform at certain points in the transversal manifold. This leads to recovery of transversal singularities in the linearized problem. The method requires a geometric condition on the transversal manifold related to pairs of intersecting geodesics, but it does not involve the geodesic X-ray transform which has limited earlier results on this problem.
\end{abstract}

\maketitle
%\setcounter{tocdepth}{1} 
%\tableofcontents

\section{Introduction} \label{sec_intro}

The anisotropic Calder\'on problem amounts to determining a conductivity matrix in a domain from current and voltage measurements on the boundary, up to a change of coordinates fixing the boundary. It is well known that in dimensions $n \geq 3$, the problem can be formulated in terms of determining a Riemannian metric in a compact manifold with boundary, up to a boundary fixing diffeomorphism, from Cauchy data of harmonic functions. The problem has been solved for real-analytic metrics \cite{LeU, LaU, LTU} and on Einstein manifolds \cite{GS}, and two-dimensional versions of the problem are also well understood \cite{N, LaU, ALP}. A related conformal anisotropic Calder\'on problem has been solved recently for conformally real-analytic metrics in~\cite{LLS}.

The anisotropic Calder\'on problem for smooth manifolds in dimensions $n \geq 3$ remains an open problem. However, there has been considerable progress in the class of conformally transversally anisotropic (CTA) smooth manifolds. It was shown in \cite{DKSaU} that these manifolds may be amenable to the method of complex geometrical optics solutions which has been very successful in the Calder\'on problem in Euclidean domains. For such manifolds in a fixed conformal class, the Calder\'on problem has been solved in \cite{DKSaU} and \cite{DKLS} under additional restrictions on the transversal geometry (simplicity in \cite{DKSaU} and injectivity of the geodesic X-ray transform in \cite{DKLS}).

In this article we continue the study of \cite{DKSaU} and \cite{DKLS}, with the objective of reducing further the limitations on the transversal geometry. In particular, we wish to introduce an alternative to the geodesic X-ray transform which has restricted the previous results.

We will only consider a linearized version of the Calder\'on problem on transversally anisotropic manifolds. In this setting, we show that boundary measurements determine a FBI type transform at certain points in the transversal manifold. This will lead to recovery of singularities results in the transversal manifold. The difference from the previous works \cite{DKSaU}, \cite{DKLS} is that we use pairs of complex geometrical optics solutions that concentrate near different transversal geodesics, instead of concentrating near the same geodesic. This will provide both spatial and frequency localization in the transversal manifold, instead of providing integrals over geodesics. We can currently carry out this program only for the linearized problem, and only for pairs of nontangential geodesics that only intersect at one point.

Let us now state the problem in detail. It is known that the anisotropic Calder\'on problem in a fixed conformal class reduces to an inverse problem for the Schr\"odinger equation \cite{DKSaU}, and we will study the Schr\"odinger problem. Let $(M,g)$ be a compact oriented Riemannian manifold with smooth boundary, and let $q \in L^{\infty}(M)$. Assume that $0$ is not a Dirichlet eigenvalue of $-\Delta_g + q$ in $M$ where $\Delta_g$ is the (negative) Laplace-Beltrami operator, and consider the Dirichlet problem 
\[
(-\Delta_g + q)u = 0 \text{ in } M, \quad u|_{\partial M} = h
\]
where $h \in H^{1/2}(M)$. The Dirichlet-to-Neumann map is the operator 
\[
\Lambda_q: H^{1/2}(\partial M) \to H^{-1/2}(\partial M), \ \ h \mapsto \partial_{\nu} u|_{\partial M}
\]
where the normal derivative is defined in a weak sense. The (nonlinear) Calder\'on problem is to recover the potential $q$ from the knowledge of $\Lambda_q$, when $(M,g)$ is known. We consider instead the linearization of this problem at the zero potential $q = 0$. This is the statement that $f$ can be recovered from $(D\Lambda)_0(f)$, where $(D\Lambda)_0$ is the Fr\'echet derivative of $q \mapsto \Lambda_q$ at $q=0$. Uniqueness in the linearized problem reduces to the following question:

\begin{question} \label{question_main}
Suppose that $f \in L^{\infty}(M)$ satisfies 
\[
\int_M f u_1 u_2 \,dV = 0
\]
for all $u_j \in H^1(M)$ with $\Delta_g u_j = 0$ in $M$. Is it true that $f=0$?
\end{question}

The linearization argument leading to Question \ref{question_main} is essentially the same as in \cite{C}, where a positive answer was also given by using complex exponentials if $M$ is a domain in $\mR^n$ with Euclidean metric (\cite{C} considers the conductivity equation, but the Schr\"odinger case is similar). Question \ref{question_main} has a positive answer if $\dim(M)=2$ \cite{GT}, or if $\dim(M) \geq 3$, $(M,g)$ is a CTA manifold and additionally the geodesic X-ray transform on the transversal manifold is injective \cite{DKLS}. Both \cite{GT} and \cite{DKLS} actually solve the nonlinear Calder\'on problem, but the methods in these papers also settle Question \ref{question_main} under the stated conditions. Let us now define CTA manifolds.

\begin{definition}
Let $(M,g)$ be a compact oriented manifold with smooth boundary and with dimension $n = \dim(M) \geq 3$.
\begin{enumerate}
\item[(a)]
$(M,g)$ is \emph{transversally anisotropic} if $(M,g) \subset \subset (T,g)$ where $T = \mR \times M_0$ and $g = e \oplus g_0$ and where $(M_0,g_0)$ is any compact manifold of dimension $n-1$ and with smooth boundary.
\item[(b)]
$(M,g)$ is \emph{conformally transversally anisotropic} (CTA) if $(M,c^{-1}g)$ is transversally anisotropic for some smooth positive function $c$, so that $g = c(e \oplus g_0)$.
\end{enumerate}
In both cases we call $(M_0,g_0)$ the \emph{transversal manifold}.
\end{definition}

Let us also give definitions related to transversal geodesics.

\begin{definition}\label{transveral_geos}
Let $(M_0,g_0)$ be a compact oriented manifold with smooth boundary.
\begin{enumerate}
\item[(a)] 
A \emph{nontangential geodesic} is a geodesic $\gamma: [a,b] \to M_0$ such that $\gamma(a)$ and $\gamma(b)$ are on $\partial M$, $\gamma(t) \in M^{\mathrm{int}}$ when $a < t < b$, and $\dot{\gamma}(a)$ and $\dot{\gamma}(b)$ are nontangential vectors on $\partial M$.
\item[(b)]
A point $(z_0,\xi_0) \in T^* M_0$ is said to be \emph{generated by a pair of nontangential geodesics} if there are two nontangential unit speed geodesics $\gamma_1$ and $\gamma_2$ in $M_0$ with $\gamma_1(0) = \gamma_2(0) = z_0$ and 
$$
\dot{\gamma}_1(0) + \dot{\gamma}_2(0) = t_0 \xi_0
$$ 
for some $0<t_0 <2$. (Here $\xi_0$ is understood as an element of $T_{z_0}M$ by duality.)
\item[(c)] If a pair of nontangential geodesics intersect only at one point, then the pair of geodesics is called \emph{admissible}. If the geodesics in part (b) intersect only at $z_0$, we say that $(z_0,\xi_0)$ is \emph{generated by an admissible pair of geodesics}.
\end{enumerate}
\end{definition}

If $(M,g)$ is a CTA manifold, we write $x = (x_1,x')$ for coordinates on $M$ where $x_1$ is the Euclidean coordinate and $x'$ are coordinates on $M_0$. If $f$ is a function on $M$, extended by zero to $\mR \times M_0$, we write 
\[
\hat{f}(\lambda,x') = \int_{-\infty}^{\infty} e^{-i\lambda x_1} f(x_1,x') \,dx_1
\]
for the Fourier transform with respect to $x_1$.

Our first main theorem states that if $f$ is orthogonal to products of harmonic functions, then $\hat{f}(\lambda,\,\cdot\,)$ must be smooth at any $(z_0, \xi_0)\in T^*M_0$, which has a neighborhood in $T^*M_0$ where every point is generated by an admissible pair of geodesics. 
%If $(z,\xi)$ belongs to this neighborhood, the intersection point of the generating geodesics is $z$.

\begin{thm} \label{thm_main1}
Let $(M,g)$ be a transversally anisotropic manifold, and suppose that $f \in L^{\infty}(M)$ satisfies 
\[
\int_M f u_1 u_2 \,dV = 0
\]
for all $u_j \in H^1(M)$ with $\Delta_g u_j = 0$ in $M$. Then for any $\lambda \in \mC$ one has 
\[
(z_0, \xi_0) \notin WF(\hat{f}(\lambda, \,\cdot\,))
\]
whenever $(z_0, \xi_0) \in T^*M_0$ has a neighborhood where every point is generated by an admissible pair of geodesics.
\end{thm}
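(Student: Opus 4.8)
The plan is to prove this by constructing, for each admissible pair of nontangential geodesics generating a point $(z,\xi)$ in a neighborhood of $(z_0,\xi_0)$, a pair of complex geometric optics (CGO) solutions of $\Delta_g u_j = 0$ whose product concentrates at the unique intersection point $z$ and realizes, up to normalization, an FBI-type transform of $\hat f(\lambda,\cdot)$ centered at $z$ with transversal frequency proportional to $\xi$. The orthogonality hypothesis will then force this transform to be $O(\tau^{-\infty})$ in the large parameter $\tau$, and letting $(z,\xi)$ range over the neighborhood, together with the standard FBI characterization of the $C^\infty$ wavefront set, will give $(z_0,\xi_0)\notin WF(\hat f(\lambda,\cdot))$.

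For the CGO solutions, I would fix a splitting $\lambda=\lambda_1+\lambda_2$, set $\sigma_1=\tau-i\lambda_1$ and $\sigma_2=-\tau-i\lambda_2$ with $\tau>0$ large (so $\sigma_1+\sigma_2=-i\lambda$), and look for harmonic functions $u_j=e^{\sigma_j x_1}w_j(x')$; since $\Delta_g=\partial_{x_1}^2+\Delta_{g_0}$ this amounts to $(\Delta_{g_0}+\sigma_j^2)w_j=0$ with $\sigma_j^2=\tau^2+O(\tau)$. I would take $w_j=v_j+r_j$ where $v_j$ is a high-order Gaussian beam quasimode concentrating on $\gamma_j$: in Fermi coordinates $(t_j,y_j)$ along $\gamma_j$, normalized so $t_j=0$ at the intersection point, $v_j$ has the form $\tau^{(n-2)/4}e^{i\tau\Theta_j}(b_j^{(0)}+\tau^{-1}b_j^{(1)}+\cdots)$ with phase $\Theta_j=t_j+\tfrac12\langle H_j(t_j)y_j,y_j\rangle+O(|y_j|^3)$, $\mathrm{Im}\,H_j>0$, principal amplitude $b_j^{(0)}$ nonvanishing along $\gamma_j$, and $H_j$ obtained from a Riccati equation that stays regular along all of $\gamma_j$ (even past conjugate points) thanks to the positive imaginary initial data --- this is exactly why Gaussian beams rather than plain WKB solutions are used. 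Taking enough terms gives $\|(\Delta_{g_0}+\sigma_j^2)v_j\|_{L^2}=O(\tau^{-N})$ for any $N$, and the Carleman estimate for the conjugated Laplacian on a transversally anisotropic manifold produces a correction $r_j$ with $\Delta_g u_j=0$, $u_j\in H^1(M)$, and $\|r_j\|_{L^2(M)}=O(\tau^{-N})$. The same construction works for complex $\lambda$, since the real part of $\sigma_j^2$ is still $\tau^2+O(\tau)$ and $e^{-i\lambda x_1}$ is bounded on the compact $x_1$-support of $f$, so that $\hat f(\lambda,\cdot)\in L^{\infty}(M_0)$.

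Plugging $u_1,u_2$ into the identity and using $e^{\sigma_1x_1}e^{\sigma_2x_1}=e^{-i\lambda x_1}$ gives
\[
0=\int_M fu_1u_2\,dV=\int_{M_0}\hat f(\lambda,x')\,v_1(x')v_2(x')\,dV_{g_0}(x')+\mathcal{R}_\tau,
\]
where $\mathcal{R}_\tau$ collects the terms involving $r_1$ or $r_2$, hence $|\mathcal{R}_\tau|\le C\|f\|_{L^\infty}\big(\|v_1\|_{L^2}\|r_2\|_{L^2}+\|r_1\|_{L^2}\|v_2\|_{L^2}+\|r_1\|_{L^2}\|r_2\|_{L^2}\big)=O(\tau^{-N})$ since $\|v_j\|_{L^2}\sim1$. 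Now $\mathrm{Im}\,\Theta_j\ge0$ with equality only on $\gamma_j$, so $v_1v_2$ is $O(\tau^{-\infty})$ away from $\gamma_1\cap\gamma_2=\{z\}$, while near $z$, in local coordinates $x'$ centered at $z$,
\[
v_1(x')v_2(x')=c_n\tau^{(n-2)/2}e^{i\tau\Psi(x')}B(x')\chi(x')+O(\tau^{(n-2)/2-1}),\qquad \Psi=\Theta_1+\Theta_2,\ \ B=b_1^{(0)}b_2^{(0)},
\]
with $B(z)\ne0$ and $\chi$ a cutoff near $z$. Here the combined phase has $d\Psi(z)=(\dot\gamma_1(0)+\dot\gamma_2(0))^\flat=t_0\xi$, a \emph{real} covector with $0<t_0<2$, and the complex symmetric Hessian of $\Psi$ at $z$ has positive definite imaginary part: $\mathrm{Im}\,\mathrm{Hess}\,\Theta_j(z)\ge0$ with kernel $\mathbb{R}\dot\gamma_j(0)$, and $\dot\gamma_1(0)\ne\pm\dot\gamma_2(0)$ because two distinct geodesics cannot be tangent at a point (uniqueness for the geodesic ODE) and cannot be anti-parallel there without coinciding as sets, which admissibility forbids. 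Hence $e^{i\tau\Psi}B\chi$ is a genuine Gaussian wave packet at $z$ with frequency $\tau t_0\xi$, and multiplying the displayed identity by $\tau^{1/2}$ shows that the FBI-type transform $\mathcal{F}_\tau(z,\xi):=\tau^{(n-1)/2}\int_{M_0}\hat f(\lambda,x')e^{i\tau\Psi(x')}B(x')\chi(x')\,dV_{g_0}(x')$ --- with the full $\tau$-expansion of the product amplitude understood, so that it is a classical symbol in $\tau$ with nonvanishing principal part at $z$ --- satisfies $\mathcal{F}_\tau(z,\xi)=-c_n^{-1}\tau^{1/2}\mathcal{R}_\tau+O(\tau^{-\infty})=O(\tau^{-\infty})$.

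Finally, since every point of a neighborhood $U$ of $(z_0,\xi_0)$ in $T^*M_0$ is generated by an admissible pair, the preceding applies to all $(z,\xi)\in U$; after shrinking $U$ one arranges the geometric data (the parameter $t_0$, the nonvanishing of $B$ at the center, the sizes of the beam tubes and of the quasimode errors) to be uniform, so that $\mathcal{F}_\tau(z,\xi)=O(\tau^{-\infty})$ uniformly on $U$. As $\tau$ ranges over a half-line and $t_0$ stays in a compact subinterval of $(0,2)$, the frequencies $\tau t_0\xi$ cover a conic neighborhood of $\xi_0$; a routine deformation argument identifies $\mathcal{F}_\tau$, modulo $O(\tau^{-\infty})$, with the classical FBI (wave-packet) transform of $\hat f(\lambda,\cdot)$ at the corresponding points, and its rapid decay there yields $(z_0,\xi_0)\notin WF(\hat f(\lambda,\cdot))$. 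I expect the main obstacle to be the two-beam CGO construction itself --- obtaining honest $O(\tau^{-\infty})$ remainders \emph{uniformly} in the geodesic pair, and making rigorous that the overlap $v_1v_2$ represents an honest FBI transform at $(z,t_0\xi)$; this is precisely where admissibility (a single intersection point, hence a localized non-degenerate Gaussian at $z$ rather than a degenerate concentration along a curve) is essential, and it is the feature that lets us dispense with the geodesic X-ray transform.
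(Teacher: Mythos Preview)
Your proposal is correct and follows the same strategy as the paper: Gaussian beam quasimodes on $(M_0,g_0)$ along the two geodesics of an admissible pair, Carleman-estimate corrections from \cite{DKSaU} to obtain harmonic functions on $M$, recognition of the product $v_1v_2$ as an FBI-type wave packet at the unique intersection point $z$ with real differential $t_0\xi$ and positive-definite imaginary Hessian (your kernel argument for the latter is exactly the paper's), and finally the $O(\tau^{-\infty})$ decay forced by the orthogonality hypothesis.

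The one place where the paper proceeds differently is the final identification step. Rather than a ``routine deformation argument'' comparing $\mathcal{F}_\tau$ with a standard FBI transform, the paper first builds an explicit \emph{smooth} parametrization $\xi\mapsto(\omega_1(\xi),\omega_2(\xi))$ of the generating covector pairs near $(z_0,\xi_0)$ (Lemma~\ref{lemma_crossing_geodesics_parametrization}), together with smooth frame choices, so that the combined phase $\Phi(\xi,x)$ and amplitude $A_\tau(\xi,x)$ become genuine polyhomogeneous symbols in $\xi$; it then verifies the admissible-phase and amplitude conditions of Wunsch--Zworski \cite{WZ} and invokes their Theorem~4.8 as a black box. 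Your route would also work, but the smooth dependence of the beams on $(z,\xi)$ --- which you treat only as a uniformity issue --- is precisely what has to be set up to make the deformation rigorous; this is where the hypothesis that a full \emph{neighborhood} of $(z_0,\xi_0)$ is generated by admissible pairs is actually used, and it is the point you correctly flag at the end as the main obstacle.
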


\begin{remark}
It is likely that an analogous statement holds for the analytic wave front set if the manifold $(M_0,g_0)$ is real-analytic. This would follow from a construction of exponentially accurate quasimodes concentrating near a nontangential geodesic on a real-analytic manifold; since we could not find an exact statement in the literature, we have left this case to a forthcoming work. It is likely that this would also lead to a full solution of the linearized problem if any point $(z_0, \xi_0) \in T^* M_0$ has a neighborhood where every point is generated by an admissible pair of geodesics. Note that even if $(M_0,g_0)$ is real-analytic, this problem would correspond to deforming a real-analytic potential by a smooth perturbation, and so the result would not reduce to known results in the real-analytic case.
\end{remark}

It was required in Theorem~\ref{thm_main1} that $(z_0,\xi_0)$ has a neighborhood in $T^*M_0$ generated by admissible geodesic pairs. The reason for this requirement is that our study of the wave front set requires small perturbations of $(z_0,\xi_0)$. We give a geometric criterion for this requirement to hold true. The condition we give is closely related to a geometric regularity condition of~\cite{StU_geodesic_nonsimple}. Their condition essentially requires that any covector $(x,\eta)$ has an orthogonal covector that defines a geodesic $\gamma$ without conjugate points.

In our definition we require that the point $x$ from where we issue the corresponding geodesic $\gamma$ has no points conjugate to it. We additionally require that $\gamma$ does not self-intersect, which is allowed in~\cite{StU_geodesic_nonsimple}. Thus the definitions are close, but not completely the same. This is why we call our condition \emph{strict Stefanov-Uhlmann regularity}.%~\footnote{I don't get it. The paper~\cite{StU_geodesic_nonsimple} assume that the are no conjugate, but apparently the allow self-intersection. Let $y$ be the self-intersection point, then for any $z$ in the thus defined loop, we can find to vectors in $T_yM$ that maps to $z$. }

\begin{definition}[Strict Stefanov-Uhlmann regularity]\label{SU_compl}
Let $(M,g)$ be a manifold with boundary. Then $(M,g)$ satisfies the strict Stefanov-Uhlmann regularity condition at $\eta\in S^*_xM$ if there exists $\xi\in S_x^*M$, with
$g(\xi,\eta)=0$, such that the geodesic $\gamma=\gamma_{\xi}$ corresponding to $\xi$ satisfies:
\begin{enumerate}
 \item[(a)] The geodesic $\gamma$ is nontangential and defined on the interval $[-t_{\mathrm{in}},t_{\mathrm{out}}
 ]$. %intersects the boundary $\p M$ transversally in its both ends at times $-t_{in}$ and $t_{out}$, and $\gamma(t)\in M^{\mathrm{int}}$, for $-t_{in}< t < t_{out}$.
 \item[(b)] $\gamma$ contains no points conjugate to $x$.
 \item[(c)] The geodesic $\gamma$ does not self-intersect for any time $t\in [-t_{\mathrm{in}},t_{\mathrm{out}}]$.
\end{enumerate}
%
%If the manifold satisfies the above condition for all $\eta\in S_x^*M$, we say that $M$ is regular sin the sense of Stefanov
\end{definition}

If $(M_0,g_0)$ satisfies the conditions of the definition at $(z_0,\xi_0)\in T^*M_0$, we will show in Section~\ref{sec_Stefanov_Uhlmann} that $(z_0,\xi_0)$ has a neighborhood generated by admissible geodesic pairs. Together with Theorem~\ref{thm_main1}, this gives:
\begin{cor}
 In the setting of Theorem~\ref{thm_main1}, assume that $(z_0,\xi_0)\in T^*M_0$ satisfies the strict Stefanov-Uhlmann regularity condition. Then $(z_0, \xi_0) \notin WF(\hat{f}(\lambda, \,\cdot\,))$ for any $\lambda \in \mC$.
\end{cor}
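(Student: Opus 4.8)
The corollary is deduced from Theorem~\ref{thm_main1} via the geometric statement announced before it: if $(M_0,g_0)$ satisfies the strict Stefanov--Uhlmann regularity condition at $(z_0,\xi_0)$, then $(z_0,\xi_0)$ has a neighbourhood in $T^*M_0$ every point of which is generated by an admissible pair of geodesics in the sense of Definition~\ref{transveral_geos}(c). Granting this, Theorem~\ref{thm_main1} gives $(z_0,\xi_0)\notin WF(\hat f(\lambda,\cdot))$ for every $\lambda\in\mC$, which is the claim; since $WF$ is conic we may and do assume $|\xi_0|_{g_0}=1$ throughout. So the work is entirely in the geometric statement, which is the content of Section~\ref{sec_Stefanov_Uhlmann}.

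\emph{The key construction.} Let $\gamma$ be the geodesic supplied by Definition~\ref{SU_compl}: unit speed, $\gamma(0)=z_0$, with $\dot\gamma(0)$ dual to a unit covector orthogonal to $\xi_0$, nontangential on $[-t_{\mathrm{in}},t_{\mathrm{out}}]$, free of points conjugate to $z_0$, and without self-intersection. Passing to a slightly enlarged open manifold so that $\exp_{z_0}$ is defined near the segment $L=\{\,t\,\dot\gamma(0): -t_{\mathrm{in}}\le t\le t_{\mathrm{out}}\,\}\subset T_{z_0}M_0$, the absence of conjugate points means $d(\exp_{z_0})$ is invertible at every point of $L$, hence on a neighbourhood of $L$ by compactness, while the absence of self-intersection means $\exp_{z_0}|_L$ is injective. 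A routine compactness argument then upgrades these to: \emph{$\exp_{z_0}$ restricts to a diffeomorphism from some open $\widetilde W\supset L$ onto its image.} I regard this step as the heart of the matter: it is precisely here that \emph{both} conditions of Definition~\ref{SU_compl} are used, and it is what the related condition of \cite{StU_geodesic_nonsimple}, which permits self-intersections, does not provide.

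\emph{Generating $(z_0,\xi_0)$ by an admissible pair.} Pick an orthonormal pair $e_1,e_2\in T_{z_0}M_0$ with $e_1$ dual to $\xi_0$ and $e_2=\dot\gamma(0)$. For $\phi\in(0,\pi/2)$ set $v_1^\phi=\cos\phi\,e_1+\sin\phi\,e_2$ and $v_2^\phi=\cos\phi\,e_1-\sin\phi\,e_2$, and let $\gamma_j^\phi$ be the geodesic from $z_0$ with velocity $v_j^\phi$, restricted to its maximal parameter interval in $M_0$. Then $\dot\gamma_1^\phi(0)+\dot\gamma_2^\phi(0)=2\cos\phi\,e_1=t_0\xi_0$ with $t_0=2\cos\phi\in(0,2)$, so this pair generates $(z_0,\xi_0)$. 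As $\phi\to\pi/2$ one has $v_1^\phi\to e_2$ and $v_2^\phi\to-e_2$, so $\gamma_1^\phi\to\gamma$ and $\gamma_2^\phi\to(\gamma\ \text{reversed})$ in $C^\infty_{\mathrm{loc}}$; hence for $\phi$ close to $\pi/2$ both $\gamma_j^\phi$ are nontangential (an open condition), and the segments $S_j^\phi=\{\,t v_j^\phi: t\ \text{in the parameter interval of }\gamma_j^\phi\,\}\subset T_{z_0}M_0$ are $C^0$-close to $L$, hence contained in $\widetilde W$. Since $\phi\in(0,\pi/2)$ forces $v_1^\phi\ne\pm v_2^\phi$, the lines through the origin carrying $S_1^\phi$ and $S_2^\phi$ are distinct, so $S_1^\phi\cap S_2^\phi=\{0\}$; injectivity of $\exp_{z_0}$ on $\widetilde W$ then gives $\gamma_1^\phi\cap\gamma_2^\phi=\exp_{z_0}(S_1^\phi)\cap\exp_{z_0}(S_2^\phi)=\exp_{z_0}(\{0\})=\{z_0\}$, i.e.\ the pair is admissible. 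Fix one such $\phi=\phi_0$.

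\emph{The neighbourhood.} For $(z,\xi)\in T^*M_0$ near $(z_0,\xi_0)$ choose a smooth unit vector field $w(z,\xi)$ with $w(z,\xi)\perp\xi$ and $w(z_0,\xi_0)=\dot\gamma(0)$, and let $\gamma_{z,\xi}$ be the geodesic from $z$ with velocity $w(z,\xi)$; it is $C^\infty_{\mathrm{loc}}$-close to $\gamma$, hence nontangential, conjugate-point free relative to $z$, and non-self-intersecting with uniform margins, and the injectivity region $\widetilde W_z$ of $\exp_z$ may be taken to depend continuously on $z$. Running the previous paragraph with $(e_1,e_2)$ replaced by $(\xi/|\xi|_{g_0}$ dualized$,\,w(z,\xi))$ and the same angle $\phi_0$ produces, for each such $(z,\xi)$, a pair of nontangential geodesics generating $(z,\xi)$ with $t_0=2\cos\phi_0/|\xi|_{g_0}\in(0,2)$ (after shrinking the neighbourhood of $(z_0,\xi_0)$), and admissibility persists because the $C^0$-closeness of $S_1^{\phi_0},S_2^{\phi_0}$ to the segment determined by $\gamma_{z,\xi}$ holds uniformly. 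This is the required neighbourhood, and the corollary follows from Theorem~\ref{thm_main1}. The only genuinely delicate point, as indicated, is the uniform injectivity of $\exp$ near the geodesic segment; granting it, the remaining steps are continuity and the openness of the three conditions in Definition~\ref{SU_compl}.
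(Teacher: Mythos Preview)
Your reduction to Theorem~\ref{thm_main1} is exactly right, and your geometric argument is correct but organized differently from the paper's. You construct directly a tubular neighbourhood $\widetilde W\supset L$ in $T_{z_0}M_0$ on which $\exp_{z_0}$ is a diffeomorphism (using that a smooth map which is injective on a compact set and a local diffeomorphism at each of its points is injective on a neighbourhood), and then read off admissibility of the pair $(\gamma_1^{\phi},\gamma_2^{\phi})$ from the fact that the two radial segments $S_1^{\phi},S_2^{\phi}$ meet only at the origin. The paper instead argues by contradiction: it assumes a sequence of intersecting pairs $(\gamma_{\xi_1^k},\gamma_{\xi_2^k})$ with $\xi_i^k\to\xi^\bot$, uses the injectivity radius to keep the intersection times bounded away from $0$, and then invokes the \emph{bundle} exponential map $E(V)=(\pi(V),\exp_{\pi(V)}V)$, whose local invertibility near $s\xi^\bot$ (no conjugate points) contradicts the existence of two distinct preimages $t_1^k\xi_1^k\neq t_2^k\xi_2^k$ with the same image.

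Your approach is cleaner at the fixed base point $z_0$. The place where the paper's organization pays off is the neighbourhood step: your sentence ``the injectivity region $\widetilde W_z$ of $\exp_z$ may be taken to depend continuously on $z$'' is exactly what the bundle map $E$ encodes for free, since a single application of the inverse function theorem to $E$ near each $s\xi^\bot$, followed by compactness in $s$, gives a tube in $TM_0$ (not just $T_{z_0}M_0$) on which $E$ is a diffeomorphism, and this handles the variation of the base point uniformly. If you rewrite your last paragraph using $E$ rather than a $z$-dependent family $\widetilde W_z$, the hand-waving disappears and the two arguments essentially merge.
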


We now sketch the argument for proving Theorem~\ref{thm_main1}. We consider harmonic functions in $(M,g)$ of the form 
\begin{align*}
u_1 &= e^{-s_1 x_1}(v_{s_1}(x') + r_1), \\
u_2 &= e^{s_2 x_1}(w_{s_2}(x') + r_2),
\end{align*}
where $s_j = \tau+i\lambda_j$ is a complex frequency, and $v_{s_1}$ and $w_{s_2}$ are quasimodes in $(M_0,g_0)$ such that as $\tau \to \infty$ 
\begin{gather*}
\norm{(-\Delta_{g_0}-s_1^2)v_{s_1}}_{L^2(M_0)} = \norm{(-\Delta_{g_0}-s_2^2)w_{s_2}}_{L^2(M_0)} = O(\tau^{-\infty}),  \\ \norm{v_{s_1}}_{L^2(M_0)} =  \norm{w_{s_2}}_{L^2(M_0)} = 1.
\end{gather*}
It is well known (see for instance \cite{DKLS}) that one can construct such quasimodes that concentrate near nontangential geodesics. If the above conditions are satisfied, the Carleman estimate in \cite{DKSaU} allows us to find correction terms $r_1$ and $r_2$ so that $u_1$ and $u_2$ are indeed harmonic and the correction terms satisfy $\norm{r_j}_{L^2(M)} = O(\tau^{-\infty})$. Inserting these functions $u_1$ and $u_2$ in the identity $\int f u_1 u_2 \,dV = 0$, we obtain 
\begin{equation}\label{integral_O_infty}
\int_{M_0} \hat{f}(\lambda,\,\cdot\,) v_{s_1} w_{s_2} \,dV_{g_0} = O(\tau^{-\infty})
\end{equation}
as $\tau \to \infty$, where 
$$
\lambda = \lambda_1-\lambda_2.
$$
We remark that the Carleman estimate in~\cite{DKSaU} is indeed needed to construct such solutions $u_1$ and $u_2$ even in the linearized problem.

In the works \cite{DKSaU} and \cite{DKLS} concerning the nonlinear problem, one takes the limit as $\tau \to \infty$ and this essentially forces one to use quasimodes $v_{s_1}$ and $w_{s_2}$ that concentrate near the \emph{same geodesic}. The reason is that if the quasimodes concentrate near different geodesics only intersecting at $z_0$, then $v_{s_1} w_{s_2} = e^{i\tau\psi} a$ where $a$ is supported near $z_0$ and $\psi$ has nonvanishing gradient near $z_0$. The resulting integral decays rapidly in $\tau$ by non-stationary phase, and one loses information about $f$ in the limit.

In this paper, we will 
\begin{itemize}
\item 
use $v_{s_1}$ and $w_{s_2}$ that concentrate near \emph{different geodesics}, and 
\item 
consider \emph{all values} $\tau_0 < \tau < \infty$ instead of taking the limit $\tau \to \infty$.
\end{itemize}
Using all values of $\tau$ would be challenging in the nonlinear problem, since one would need asymptotic expansions of quasimodes up to high order and the unknown potential $q$ would appear in the expansions. However, in the linearized problem it is enough to construct the quasimodes for $q=0$ and one can consider expansions to arbitrarily high order.

Now, if $v_{s_1}$ and $w_{s_2}$ concentrate near nontangential geodesics $\gamma_1$ and $\gamma_2$ that only intersect at $z_0$ when $t=0$, then the product $v_{s_1} w_{s_2}$ is supported in a small neighborhood of $z_0$ and $v_{s_1} w_{s_2} = e^{i\tau\psi} a$ where $\nabla \psi(z_0) = \dot{\gamma}_1(0) + \dot{\gamma}_2(0)$. This results in an FBI type transform that can be used to characterize the wave front set of $\hat{f}(\lambda,\,\cdot\,)$ at any such $(z_0,\xi_0)$ where $\xi_0 = \dot{\gamma}_1(0) + \dot{\gamma}_2(0)$.  %If the underlying manifold is real-analytic, we can use exponentially precise Gaussian beam quasimodes (with error $O(e^{-c/h})$ instead of $O(h^{\infty})$) to characterise the analytic wave front set instead.

One could also consider the case where $v_{s_1}$ and $w_{s_2}$ concentrate near nontangential geodesics that intersect several times. In this case, the product $v_{s_1} w_{s_2}$ is supported in the union of small neighborhoods of the intersection points, and each intersection point produces a contribution in the integral. Thus there will be several terms whose sum is $O(\tau^{-\infty})$, but it is not clear to us at the moment how to separate the contributions from the different intersection points.

We remark that the results are given on transversally anisotropic manifolds instead of CTA manifolds. The reason is that the standard reduction from $\Delta_{cg}$ to $\Delta_g$ as in \cite{DKSaU} produces a potential, and if $c$ depends on $x_1$ then the potential would also depend on $x_1$. This is not compatible with the separation of variables argument here. However, our method applies with small modifications if the conformal factor only depends on $x'$, and similarly one could include a potential only depending on $x'$.

Let us conclude with some further references on the linearized Calder\'on problem. It seems to us that in many cases where uniqueness is known in the linearized problem, one also knows uniqueness in the corresponding nonlinear problem. We refer to the survey \cite{Uhlmann2014} for references on the Calder\'on problem in general. Concerning the linearized problem, \cite{DKSjU09} solves the Calder\'on problem with partial data linearized at $q=0$, in a Euclidean domain $\Omega \subset \mR^n$ with measurements on a fixed subset of $\partial \Omega$. The argument involves analytic microlocal analysis and Kashiwara's watermelon theorem. This result has been extended in \cite{SjU} to the Calder\'on problem linearized at a real-analytic potential with measurements on a real-analytic part of the boundary. We remark that corresponding results are open for the nonlinear Calder\'on problem if $n \geq 3$ (see the survey \cite{KSa_survey}). Linearized Calder\'on type problems on Riemannian manifolds are discussed in~\cite{Sh09}.

This paper is organized as follows. Section \ref{sec_intro} is the introduction. In Section \ref{sec_simple} we prove recovery of singularities results when the transversal manifold is simple. Much stronger results are known in this case \cite{DKSaU}, but the discussion here illustrates our method in an easy setting. In Section~\ref{sec_Stefanov_Uhlmann} we show that the strict Stefanov-Uhlmann regularity condition is sufficient for finding enough admissible geodesic pairs. Section \ref{sec_quasimodes} gives a construction and parametrization of Gaussian beams. The construction is well known, but we give the argument in detail since this will be needed later. Finally, Section \ref{sec_fbi} shows that one can recover FBI type transforms (see~\cite{Folland},~\cite{WZ}) from our data, and proves Theorem \ref{thm_main1}.

\subsection*{Acknowledgements}
The authors would like to thank Jared Wunsch for clarifying details of the work~\cite{WZ}. M.L., T.L.\ and M.S.\ were supported by the Academy of Finland (Centre of Excellence in Inverse Problems Research, grant numbers 284715 and 309963). T.L.\ and M.S.\ were also partly supported by an ERC Starting Grant (grant number 307023). 
D.DSF.\ was partially supported by l'Agence Nationale pour la Recherche under grant ANR-13-JS01-0006 \textit{iproblems}.

\section{Simple transversal manifolds} \label{sec_simple}
As a motivation, we first consider the case where the transversal manifold is simple. In this section, we will construct quasimodes $v_s$ and $w_s$ that concentrate near different geodesics on simple manifolds. In the simple case we already know the full result based on injectivity of the ray transform~\cite{DKSaU}, but it will be useful to do this in another way.

In this section, we write $(M,g)$ instead of $(M_0,g_0)$ and $x$ instead of $x'$, and thus $\dim(M)=n-1$. We set $m=n-1$. Let $(M,g) \subset \subset (\widehat{M},g)$ where $(M,g)$ and $(\widehat{M},g)$ are simple. A compact manifold $(M,g)$ with boundary is simple if the boundary is strictly convex, and the exponential map at any point is a diffeomorphism onto $M$.

Let $z \in M^{int}$ and fix $\xi \in S_z^* M$. Denote by $\gamma_\xi(t)$ the geodesic starting at $z$ in codirection $\xi$. We wish to construct a quasimode $v_s$, $s=\tau+i\lambda$, concentrating near $\gamma_\xi$ of the form 
$$
v_s = e^{is\psi} a
$$
satisfying $\norm{(-\Delta-s^2)v_s}_{L^2(M)} = O(\tau^{-\infty})$ and $\norm{v_s}_{L^2(M)} = O(1)$. Here the notation that a quantity is $O(\tau^{-\infty})$ means that the quantity is $O(\tau^{-N})$ for each $N$ large enough with implied constants depending on $N$.

To do this, let $p(z,\xi) = \gamma_\xi(-\hat{\tau}(z,-\xi)) \in \partial \widehat{M}$ be the point where the geodesic $\gamma_\xi$ enters $\widehat{M}$ ($\hat{\tau}$ is the time when $\gamma_{-\xi}$ exists $(\widehat{M},g)$), and choose 
$$
\psi(x; z, \xi) = \text{dist}_{(\hat{M},g)}(x, p(z,\xi)).
$$
Then $\psi$ is smooth in $(x,z,\xi)$ when $x$ is near $M$ and $(z,\xi) \in S^* M$, since $p(z,\xi)\in \partial \widehat{M}$ stays away from $x$ (see e.g.~\cite{M01X}), and $\psi$ satisfies the eikonal equation $\abs{\nabla \psi} = 1$. We compute 
\begin{align*}
e^{-is\psi}(-\Delta-s^2) v_s &= s^2(\abs{\nabla \psi}^2-1)a -is (2\langle d\psi, d\,\cdot\, \rangle + \Delta \psi)a - \Delta a \\
 &= -2is La - \Delta a
\end{align*}
where $2L = 2\langle d\psi, d\,\cdot\, \rangle + \Delta \psi$. We formally write 
$$
a = \sum_{j=0}^\infty s^{-j}a_{-j}% + s^{-1} a_{-1} + \ldots + s^{-N} a_{-N}
$$
and require that 
\begin{align*}
L a_0 &= 0, \\
L a_{-1} &= \frac{i}{2} \Delta a_0, \\
L a_{-N} &= \frac{i}{2} \Delta a_{-(N-1)} \\
 &\vdots
\end{align*}
After solving for $a_{-j}$ recursively, the sum can be made to converge using Borel summation as in  Section~\ref{sec_quasimodes}.

%If these hold, then $(-\Delta-s^2)v_s = -s^{-N} \Delta a_{-N} = O_{L^2}(\tau^{-N})$ since the construction will ensure that each $a_j$ is independent of $\tau$, and the condition $v_s = O_{L^2}(1)$ will also be true.

To solve the transport equations near $M$, take polar normal coordinates $(r,\theta)$ centered at the point $p(z,\xi)$ where the geodesic $\gamma_\xi$ enters $\widehat{M}$. Then $\psi = r$, $L=\p_r+\p_r(\log\abs{g}^{1/4})$, and we have 
$$
Lu = f \Longleftrightarrow \partial_r (\abs{g}^{1/4} u) = \abs{g}^{1/4} f
$$
where $\abs{g} = \det(g(r,\theta))$. Using this, we first choose a solution $a_0(r,\theta) = \abs{g}^{-1/4} \chi(\theta)$ for some $\chi \in C^{\infty}(S^{m-1})$, where $\chi$ is supported near $\dot{\gamma_\xi}(t)$ at the time $t=-\hat{\tau}(z,-\xi)$. Since
$$
r = \psi(x;z,\xi), \quad \theta = \frac{1}{\psi(x;z,\xi)} \exp_{p(z,\xi)}^{-1}(x),
$$
we see that $a_0 = a_0(x;z,\xi)$ depends smoothly on $(x,z,\xi)$ for $x$ near $M$. Integrating in $r$, we successively obtain functions $a_{-1},a_{-2}, \ldots$ that are independent of $\tau$, smooth near $M$, depend smoothly on $(x,z,\xi)$, vanish when $\theta \notin \supp(\chi)$, and satisfy the required equations. Using Borel summation as in Section~\ref{sec_quasimodes} we may find $a_s=a_s(x;z,\xi)$, $C^\infty$ in its arguments, so that
$$
a_s\sim\sum_{j=0}^\infty s^{-j}a_{-j}
$$
and that $v_s=e^{is\psi}a_s$ satisfies
$$
(-\Delta-s^2)v_s = O_{L^2(M)}(\tau^{-\infty}).
$$

We have completed the construction of a quasimode concentrating near a geodesic. Now consider two geodesics: let $z \in M^{int}$ be a interior point on the (transversal) manifold, let $\xi_1, \xi_2 \in S_z^* M$ with $\xi_1 \neq \xi_2$, and let $v_{s_1}$ and $w_{s_2}$ be quasimodes as above concentrating near $\gamma_{\xi_1}$ and $\gamma_{\xi_2}$ and having the forms 
$$
v_{s_1} \sim e^{is_1 \psi_1}(a_0 + s_1^{-1} a_{-1}\cdots ), \qquad w_{s_2} \sim e^{i s_2 \psi_2}(b_0 + s_2^{-1}b_{-1}+\cdots)
$$
where $\psi_j=\psi(\, \cdot \, ;z,\xi_j)$ and $s_j = \tau + i \lambda_j$, $j=1,2$. We use two functions $\chi_1, \chi_2 \in C^{\infty}(S^{m-1})$ for the two quasimodes.

Now, since $(M,g)$ is simple, the two geodesics intersect only at $z$. Choosing $\chi_j$ with small enough support, we may arrange so that $v_{s_1} w_{s_2}$ is supported in any given neighborhood of $z$. Writing $y$ for normal coordinates in $(M,g)$ centered at $z$, the integral of interest reduces to
\begin{equation}\label{error_of_apprx}
\int_M \hat{f}(\lambda,\cdot) v_{s_1} w_{s_2} \,dV_g = \int_{\mR^{m}} \hat{f}(\lambda,\cdot)e^{i\tau \tilde{\psi}} \tilde{a}  \,dy = O(\tau^{-\infty})
\end{equation}
where 
$$
\tilde{\psi}(y;z,\xi_1,\xi_2) = \psi(y;z,\xi_1)+\psi(y;z,\xi_2)
$$
and
$$
\lambda=\lambda_1-\lambda_2
$$
and 
$$
\tilde{a}(y;z,\xi_1,\xi_2,\tau,\lambda_1,\lambda_2) = e^{-\lambda_1 \psi_1 - \lambda_2 \psi_2}a_{s_1}b_{s_2}%(a_0 + s_1^{-1} a_{-1}\cdots )\times (b_0 + s_2^{-1}b_{-1}+\cdots) 
\abs{g(y)}^{1/2}.
$$
The function $\hat{f}(\lambda,\cdot)$ is independent of $z,\xi_1,\xi_2$. That the integral~\eqref{error_of_apprx} is indeed $O(\tau^{-\infty})$ uses the Carleman estimates in~\cite{DKSjU09} and the fact that $v_{s_j}$, $j=1,2$, are eigenfunctions up to an error of $O_{L^2(M)}(\tau^{-\infty})$. 

%~\footnote{Should here be $\hat{f}(\lambda_1-\lambda_2,\cdot)$? Also in the integral, I changed $m$ to $n-1$. Also, $g$ in the first integral is $n$-dimensional metric of the total manifold, while in the formula for $\tilde{a}$ it is the $n-1$ dimensional metric (though the determinant is the same for both). -Tony}

Let now $(z_0,\xi_0) \in S^* M$ be the point and direction of interest. We wish to consider two fixed covectors $(z_0,\zeta_1), (z_0,\zeta_2) \in S_{z_0}^* M$, with $\zeta_1 + \zeta_2$ pointing in the direction of $\xi_0$, which generate the two geodesics that we will use in the construction above. Note that if $\zeta_1 + \zeta_2 = t\xi_0$ for some $t > 0$, and if also $\zeta_1 \neq \xi_0$ then necessarily 
\[
\zeta_2 = 2 \langle \zeta_1, \xi_0 \rangle \xi_0 - \zeta_1, \qquad 0 < \langle \zeta_1, \xi_0 \rangle < 1.
\]
Conversely, if we fix any $(z_0,\zeta_1)$ with $0 < \langle \zeta_1, \xi_0 \rangle < 1$, then $\zeta_2$ defined above will satisfy $\zeta_1 + \zeta_2 = t\xi_0$ where $t = 2 \langle \zeta_1, \xi_0 \rangle > 0$.

Assume now that we have fixed the covector $(z_0,\xi_0)$ of interest and a covector $(z_0, \zeta_1)$ such that $0 < \langle \zeta_1, \xi_0 \rangle < 1$ (thus $\zeta_1 + \zeta_2 = t_0 \xi_0$ where $0 < t_0 < 2$), so that the initial two geodesics are generated by $(z_0, \zeta_1)$ and $(z_0, \zeta_2)$. For the characterization of the wave front set, we wish to make the two geodesics depend smoothly on $(z,\xi)$ near $(z_0,\xi_0)$. One way to do this is as follows. 
The proof of the lemma is included in the Appendix.

\begin{lemma} \label{lemma_crossing_geodesics_parametrization}
Let $(z_0,\xi_0) \in S_{z}^* M$ be the point and direction of interest, and let $\zeta_1, \zeta_2 \in S_{z_0}^* M$ satisfy $\zeta_1 + \zeta_2= t_0 \xi_0$ with $0 < t_0 < 2$. Then, there exists a neighborhood $U_S$ of $(z_0,\xi_0)$ in $S^*M$ and a smooth mapping $I:U_S\to S^*M\times S^*M$, with
$$
I(\xi)=(\omega_1(\xi), \omega_2(\xi))
$$ 
so that 
\begin{gather*}
\omega_1(\xi_0) = \zeta_1, \qquad \omega_2(\xi_0) = \zeta_2, \\
\omega_1(\xi) + \omega_2(\xi) = t_0 \xi.
\end{gather*}
%We write this parametrization as 
%$$
%I(\xi)=(\omega_1(\xi),\omega_2(\xi))%, \quad \eta_i(\omega)=\omega_i(z,\omega).
%$$
\end{lemma}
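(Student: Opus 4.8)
The plan is to strip away the base–point dependence using a local orthonormal coframe, reduce to a pointwise statement on a fixed Euclidean sphere, and then write down an explicit smooth formula. First I would fix a smooth $g_0$-orthonormal coframe $E_1,\dots,E_m$ on a small coordinate neighborhood $W$ of $z_0$ in $M$ (here $m=\dim M$). This trivializes $S^*M|_W$ via the diffeomorphism $W\times S^{m-1}\to S^*M|_W$, $(z,a)\mapsto \sum_i a_i E_i(z)$, because $\{E_i(z)\}$ is orthonormal for the dual metric. Under this identification the requirement $\omega_1(\xi)+\omega_2(\xi)=t_0\xi$ together with $\abs{\omega_j(\xi)}=1$ becomes a condition on the $S^{m-1}$-coordinates alone, not involving the base point $z$: writing $a_0\in S^{m-1}$ for the coordinates of $\xi_0$ and $c_1,c_2\in S^{m-1}$ for those of $\zeta_1,\zeta_2$, the task reduces to constructing a smooth map $a\mapsto (b_1(a),b_2(a))$ from a neighborhood of $a_0$ in $S^{m-1}$ into $S^{m-1}\times S^{m-1}$ with $b_1(a)+b_2(a)=t_0 a$ and $b_j(a_0)=c_j$.

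Next I would use the elementary observation already recorded in the text: two unit vectors whose sum is $t_0 a$ are obtained from one another by reflection across the line $\R a$. Write $c_1=\cos\alpha\, a_0+\sin\alpha\,\nu_0$ with $\cos\alpha=t_0/2\in(0,1)$, $\nu_0\perp a_0$, $\abs{\nu_0}=1$; note $\zeta_1\neq\zeta_2$ since $t_0<2$, so $\sin\alpha>0$ and $\nu_0$ is well defined, and then automatically $c_2=\cos\alpha\, a_0-\sin\alpha\,\nu_0$. All that is needed is a smooth unit vector field $\nu(a)$ near $a_0$ with $\nu(a)\perp a$ and $\nu(a_0)=\nu_0$; take $\tilde\nu(a)=\nu_0-\langle\nu_0,a\rangle a$ and $\nu(a)=\tilde\nu(a)/\abs{\tilde\nu(a)}$. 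Since $\abs{a}=1$ one has $\langle\tilde\nu(a),a\rangle=0$, and $\tilde\nu(a_0)=\nu_0\neq 0$, so $\nu$ is smooth and well defined on a neighborhood $V$ of $a_0$ in $S^{m-1}$ with $\nu(a)\perp a$, $\abs{\nu(a)}=1$, $\nu(a_0)=\nu_0$. Setting $b_j(a)=\cos\alpha\, a\pm\sin\alpha\,\nu(a)$ gives $b_1(a)+b_2(a)=2\cos\alpha\,a=t_0 a$, $\abs{b_j(a)}^2=\cos^2\alpha+\sin^2\alpha=1$, and $b_j(a_0)=c_j$.

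Finally I would transport this back: let $U_S$ be the preimage of $W\times V$ under the trivialization above, and define $\omega_j(\xi)$ to be the covector at the base point $z$ of $\xi$ whose $E$-coordinates are $b_j(a)$, where $a$ are the $E$-coordinates of $\xi$. Then $I=(\omega_1,\omega_2):U_S\to S^*M\times S^*M$ is smooth (both the coframe and $a\mapsto(b_1(a),b_2(a))$ are smooth), $\omega_j(\xi)$ and $\xi$ share the same base point so the identity $\omega_1(\xi)+\omega_2(\xi)=t_0\xi$ makes sense and holds fiberwise, and the pointwise computations give $\omega_j(\xi_0)=\zeta_j$.

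There is no genuine obstacle here; the content is entirely the separation of base-point dependence from the spherical condition via the orthonormal coframe, plus the explicit normalization defining $\nu(a)$. The only points requiring care are checking $\sin\alpha\neq 0$, which is forced by $t_0<2$ (equivalently $\zeta_1\neq\zeta_2$), and shrinking $U_S$ so that $\tilde\nu(a)/\abs{\tilde\nu(a)}$ is defined throughout.
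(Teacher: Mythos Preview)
Your proof is correct and follows essentially the same approach as the paper's first proof in the Appendix: both trivialize the sphere bundle near $(z_0,\xi_0)$ (you via a local orthonormal coframe, the paper via normal coordinates with a normalization), then obtain $\omega_1(\xi)$ by smoothly selecting a unit vector orthogonal to $\xi$ from a fixed reference direction and combining it with $\xi$ so that $\langle \omega_1(\xi),\xi\rangle = t_0/2$, after which $\omega_2 = t_0\xi - \omega_1$ by reflection. The paper also records a second proof using parallel translation along geodesics from $z_0$ together with a plane rotation, which packages the same elementary content differently.
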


%Assume now that $m = \dim(M) = 2$. We consider $z = z_0 \in M^{int}$ as a fixed parameter, fix $\omega_0 \in S_z^* M$, and define for $\omega \in S_z^* M$ 
%$$
%\omega_1(\omega) = \frac{\omega+\omega_{\perp}}{\abs{\omega+\omega_{\perp}}}, \qquad \omega_2(\omega) = \frac{\omega-\omega_{\perp}}{\abs{\omega-\omega_{\perp}}}.
%$$
%These correspond to the rotation of $\omega$ by $45$ degrees counterclockwise and clockwise, and they satisfy 
%$$
%\omega_1(\omega) + \omega_2(\omega) = \sqrt{2} \omega.
%$$

Let now $(z_0,\xi_0)\in S^*M$ and let $\omega_j:U_S\to S^*M$, $j=1,2$, be the parametrization given by the lemma above. Let $(z,\xi) \in T^* M$ be a non-zero covector. We set 
$$
\hat{\xi}=\frac{\xi}{\abs{\xi}}.
$$
Assume that $(z,\hat{\xi})$ is sufficiently close to $(z_0,\xi_0)$, so that $(z,\hat{\xi})\in U_S \subset S^*M$.
Thus, for $(z,\xi)\in T^*M$ with $(z,\hat{\xi})$ near $(z_0,\xi_0)$ we can define 
$$
\psi(y ; z,\xi) =\frac{\abs{\xi}}{t_0} \tilde{\psi}(y;z,\omega_1(\hat{\xi}), \omega_2(\hat{\xi}))
$$
and 
$$
a(y ; z,\xi) = \tilde{a}(y;z,\omega_1(\hat{\xi}), \omega_2(\hat{\xi}),\abs{\xi}/t_0,\lambda_1,\lambda_2) \chi(z,\xi)
$$
where $\chi$ is a $C^{\infty}$ cutoff with $\chi = 1$ when $z$ is close to $z_0$, $\hat{\xi}$ is close to $\xi_0$ and $\abs{\xi} \geq \tau_0$, and $\chi = 0$ otherwise. The norm of $\xi$ plays now the role of $\tau$, and by writing $\tau = \abs{\xi}/t_0$, we know that  
$$
\int_{\mR^m} e^{i\psi(y;z,\xi)} a(y;z,\xi) \hat{f}(\lambda,y) \,dy = O(\abs{\xi}^{-\infty})
$$
for all $(z,\hat{\xi})$ near $(z_0,\xi_0)$ (notice that the integral is really over a small neighborhood of $z_0$).

Integrating over $\xi$ gives that 
\[
\int_{\mR^m} \int_{\mR^m} e^{i\psi(y;z,\xi)} a(y;z,\xi) \hat{f}(\lambda,y) \, dy \,d\xi \in C^\infty
\]
as a function of the $z$ variable since repeated differentiation in $z$ of the integrand  does not alter its decay properties in $|\xi|$ and hence its integrability.
It remains to check that the operator
\[
    A_{\psi}f =\int_{\mR^m} \int_{\mR^m} e^{i\psi(y;z,\xi)} a(y;z,\xi)  f(y) \,dy \,d\xi 
\]
is a pseudodifferential operator with a principal symbol which does not vanish at $(z_0,\xi_0)$.
To do so we will use the equivalence of phase functions, see for instance Theorem 3.2.1 in \cite{sogg}.
The main point is that $\psi$ is homogeneous of degree $1$ in $\xi$, and that $\psi$ is of the form 
\[
\psi(y;z,\xi) = (y-z) \cdot \xi + \mathcal{O}(\abs{z-y}^2\abs{\xi})
\] 
To verify the later, use Taylor's formula by noting that 
\begin{align*}
\nabla_y \psi(y;z,\xi)|_{y=z} &= \frac{\abs{\xi}}{t_0} \nabla_y \tilde{\psi}(y;z,\omega_1(\hat{\xi}),\omega_2(\hat{\xi}))|_{y=z} \\
 &= \frac{\abs{\xi}}{t_0}(\nabla_y \psi(y;z,\omega_1(\hat{\xi})) + \nabla_y \psi(y;z,\omega_2(\hat{\xi})))|_{y=z} \\
 &= \frac{\abs{\xi}}{t_0} (\omega_1(\hat{\xi}) + \omega_2(\hat{\xi}))=\abs{\xi}\hat{\xi} \\
 &= \xi.
\end{align*}
From the form of $\psi$, we also get
\begin{align*}
     |\nabla_{\xi} \psi(y;z,\xi)| \geq |z-y|- \mathcal{O}(\abs{z-y}^2) \geq c|z-y|
\end{align*}
if we shrink as we may the support of the amplitude $a$ in $y$ and $z$ in a small support of $z_0$. 
Moreover, by construction $\tilde{a}$ is a polyhomogeneous symbol in $(z,\xi)$ hence $a \in S^0$ because $\chi$ cuts off frequencies lower than $\tau_0$.
Theorem 3.2.1 in \cite{sogg} implies that $A_{\psi}$ is a pseudodifferential operator of order $0$ and 
\[
     A_{\psi} - a(z,z,D) \in Op(S^{-1})
\]
hence the principal symbol of $A_{\psi}$ is 
\[
   e^{-\lambda_1\psi_1-\lambda_2\psi_2}a_0 b_0 |g(z)|^{1/2} = e^{-\lambda_1\psi_1-\lambda_2\psi_2}\chi_1 \chi_2
\]
which doesn't vanish at $(z_0,\xi_0)$ if $\chi_1$ and $\chi_2$ are chosen to equal $1$ near $\dot{\gamma}_1(t)$ and $\dot{\gamma}_2(t)$ respectively. 
From the fact that $A_{\psi}$ is a pseudodifferential operator of order $0$ with principal symbol non vanishing at $(z_0,\xi_0)$ and from
     $$ A_{\psi}\hat{f}(\lambda,\cdot) \in C^{\infty} $$
we deduce that $(z_0,\xi_0)$ does not belong to the wave front set of  $\hat{f}(\lambda,\cdot)$.
\begin{remark}
      An alternate argument is to observe that multiplying 
$$
\int_{\mR^m} e^{i\psi(y;z_0,\xi)} a(y;z_0,\xi) \hat{f}(\lambda,y) \,dy = O(\abs{\xi}^{-\infty})
$$
by $\kappa(x) e^{-i\psi(x;z_0,\xi)}$, where $\kappa$ is a cutoff to a small neighborhood of $z_0$, and integrating over $\xi$ gives that 
\[
\int_{\mR^m} \int_{\mR^m} e^{i\varphi(x,y;\xi)} P(x,y;\xi) \hat{f}(\lambda,y) \,dy \,d\xi \in C^\infty.
\]
as a function of the $x$ variable with
\begin{align*}
\varphi(x,y,\xi) &:= -\psi(x;z_0,\xi) + \psi(y;z_0,\xi), \\
P(x,y,\xi) &:= \kappa(x) a(y;z_0,\xi).
\end{align*}
By the previous computations on the phase $\psi$ we have the non-degeneracy of the mixed hessian $\psi''_{y \xi}(z;z,\xi) =  \mathrm{Id}$, hence 
     $$ \det \psi''_{y \xi}(z;z_0,\xi) \neq 0 $$
for $z$ close to $z_0$ and we can use Kuranishi's trick. From the equality
\begin{align*}
     -\varphi(x,y,\xi) = (x-y) \cdot  \bigg(\int_0^1 \nabla_y \psi(tx+(1-t)y;z_0,\xi) \, dt\bigg) = (x-y) \cdot \eta
\end{align*}
the change of variables 
\[
     \eta = \int_0^1 \nabla_y \psi(tx+(1-t)y;z_0,\xi) \, dt
\]
leads to
\[
\int_{\mR^m} \int_{\mR^m} e^{-i(x-y) \cdot \eta} \tilde{P}(x,y;\eta) \hat{f}(\lambda,y) \,dy \,d\eta \in C^\infty.
\]
And one can check just as in the previous alternate that the principal symbol of this pseudodifferential operator (which differs from $\tilde{P}(x,x,D)$ by an operator of order $-1$) does not vanish at $(z_0,\xi_0)$.
\end{remark}

\section{The strict Stefanov-Uhlmann regularity condition}\label{sec_Stefanov_Uhlmann}
We now shift our attention from simple (transversal) manifolds to a more general class of manifolds satisfying the strict Stefanov-Uhlmann regularity condition of Definition~\ref{SU_compl}. As in the case of simple manifolds, that we studied in the previous section, we wish to probe the singularities of $\hat{f}(\lambda,\cdot)$ at any given point $z_0\in M$ on the transversal manifold $M$ to any given direction $\xi_0\in S^*_{z_0}M$. To simplify the notation, we denote the direction of interest $(z_0,\xi_0)$ by $(x,\eta)\in S^*_xM$ in this section. 

To probe the singularities of $\hat{f}(\lambda,\cdot)$, our technique in the following sections requires that the direction of interest $\eta\in S^*_xM$ is generated by a pair $(\gamma_1,\gamma_2)$ of nontangential geodesics with
$$
\dot{\gamma}_1(0)+\dot{\gamma}_2(0)=t_0\eta
$$
that do not intersect each other outside $x$. We chose to call such pairs admissible geodesic pairs in  Definition~\ref{transveral_geos}. 
Additionally, our technique will require that not just $\eta$ is generated by an admissible geodesic pair, but that $\eta$ has a neighborhood in $S^*M$ generated by such a pairs. %,we will need to perturb the direction $\eta$ slightly in both its base point and in its direction. The perturbed directions also need to be generated by admissible geodesic pairs.
In this section we show that the strict Stefanov-Uhlmann condition at $\eta\in S^*M$ is sufficient for these requirements to hold true. 

Let $\eta\in S_x^*M$ be a given direction, and let $H=\{\eta\}^\bot\subset T_x^*M$ be the orthogonal complement to $\eta$. Let $\xi^\bot\in S_x^*M\cap H$ be a unit vector in the orthogonal complement. Let $\eps>0$ and define
$$
\xi_1=\xi^\bot+\eps\eta \mbox{ and } \xi_2=-\xi^\bot+\eps\eta.
$$
Then, we have that
\begin{equation}\label{generates_eta}
\xi_1+\xi_2=2\eps\eta \mbox{ and }
\hat{\xi}_1+\hat{\xi}_2=t_0\eta,
\end{equation}
where $\hat{\xi}_i=\xi_i/\abs{\xi_i}\in S^*M$ and $t_0=t_0(\eps)$. We also have
\begin{equation}\label{perturbation_close}
\|\hat{\xi}_1-\xi^\bot\|\sim \eps \mbox{ and } \|-\hat{\xi}_2-\xi^\bot\|\sim \eps.
\end{equation}
%\begin{equation}
%\|\hat{\xi}_1-\xi\|\sim 2\eps \mbox{ and } \|-\xi_2-\xi\|=\eps.
%\end{equation}
Here we can take $\|\cdot\|$ to be for example the Sasaki metric on $S^*M$.
The above means that $\eta$ is generated by the unit covectors $\hat{\xi}_i$, $i=1,2$, with the property that $\xi_1,-\xi_2$ are close to $\xi^\bot$. 

Now, referring to the parametrization $I$ of Lemma~\ref{lemma_crossing_geodesics_parametrization}, we have that $\eta$ has a neighborhood generated by pairs $I(\xi)=(\omega_1(\xi),\omega_2(\xi))$ of unit covectors, with $\xi$ close to $\eta$. The parametrization of Lemma~\ref{lemma_crossing_geodesics_parametrization} is continuous and $\omega_i(\eta)=\xi_i$. By the above we have that $\xi_1$ and $\xi_2$ can be chosen to be arbitrarily close to $\xi^\bot$ and $-\xi^\bot$ (by making $\eps$ smaller). Thus for any small neighborhood $U$ of $\xi^\bot$ in $S^*M$ there is a neighborhood $U_S$ of $\eta$ such that
\begin{equation}\label{nhood_of_eta_to_nhood_of_bot}
I(U_S)\subset U\times -U.
\end{equation}
We also have that $\omega_1(\xi)\neq \omega_2(\xi)$ for $\xi \in U_S$. We use these observations in the lemma below.

We recall the strict Stefanov-Uhlmann regularity condition (Definition~\ref{SU_compl}) in the setting of this section:
\begin{enumerate}
 \item[(a)] The geodesic $\gamma_{\xi^\bot}$ is nontangential and defined on the interval $[-t_{\mathrm{in}},t_{\mathrm{out}}
 ]$.
 \item[(b)] The graph $\gamma_{\xi^\bot}([-t_{\mathrm{in}},t_{\mathrm{out}}])$ of $\gamma_{\xi^\bot}$ contains no points conjugate to $x=\pi(\xi^\bot)$.
 \item[(c)] The geodesic $\gamma_{\xi^\bot}$ does not self-intersect for any time $t\in [-t_{\mathrm{in}},t_{\mathrm{out}}]$.
\end{enumerate}
We show next that this condition is enough for $\eta$ to have a neighborhood generated by admissible geodesic pairs. 

\begin{lemma}
 Let $(M,g)$ be a compact manifold with boundary, let $x\in \text{Int}(M)$ and $\eta\in S_x^*M$, and assume that $M$ satisfies the strict Stefanov-Uhlmann regularity condition at $\eta$. Then there exists a neighborhood of $\eta$ in $S^*M$ which is generated by admissible geodesic pairs.
\end{lemma}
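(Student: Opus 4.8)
The plan is to build the required neighborhood directly from the parametrization $I$ of Lemma~\ref{lemma_crossing_geodesics_parametrization}. Recall the unit covectors $\hat\xi_1,\hat\xi_2$ produced above from $\xi^\bot$; they satisfy $\hat\xi_1+\hat\xi_2=t_0\eta$ with $0<t_0<2$, because $|\hat\xi_1+\hat\xi_2|\le 2$ with equality only when $\hat\xi_1=\hat\xi_2$, which is excluded. Applying Lemma~\ref{lemma_crossing_geodesics_parametrization} with $(\zeta_1,\zeta_2)=(\hat\xi_1,\hat\xi_2)$ yields a neighborhood $U_S$ of $\eta$ in $S^*M$ and continuous maps $\omega_1,\omega_2\colon U_S\to S^*M$ with $\omega_i(\eta)=\hat\xi_i$, $\pi(\omega_i(\xi))=\pi(\xi)$, $\omega_1(\xi)+\omega_2(\xi)=t_0\xi$, and, as recorded before the statement, $\omega_1(\xi)\neq\omega_2(\xi)$; moreover $U_S$ can be shrunk so that $I(U_S)\subset U\times(-U)$ for any preassigned small neighborhood $U$ of $\xi^\bot$ in $S^*M$. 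For each $\xi\in U_S$ the unit-speed geodesics $\gamma_{\omega_1(\xi)}$ and $\gamma_{\omega_2(\xi)}$ both pass through $z:=\pi(\xi)$ at time $0$ with $\dot\gamma_{\omega_1(\xi)}(0)+\dot\gamma_{\omega_2(\xi)}(0)=t_0\xi$, $0<t_0<2$, so it suffices to shrink $U_S$ once more so that for every $\xi\in U_S$ these two geodesics are nontangential and meet only at $z$.

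Nontangentiality is the routine part. Being a nontangential geodesic is an open condition on the initial unit covector --- the exit times vary continuously, the endpoints meet $\partial M$ transversally, and the open arc stays in $M^{\mathrm{int}}$, all of which survive small perturbations. By hypothesis (a) the geodesic $\gamma_{\xi^\bot}$ is nontangential, and hence so is $\gamma_{-\xi^\bot}$ (it is $\gamma_{\xi^\bot}$ run backwards, still transversal at both ends). I would therefore choose $U$ so small that $\gamma_v$ is nontangential for every $v\in U\cup(-U)$ and then take $U_S$ with $I(U_S)\subset U\times(-U)$.

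The intersection statement is the heart of the argument, and this is where hypotheses (b) and (c) enter. I would first pass to a slight enlargement $\widehat{M}\supset\supset M$ and, using that (b) and (c) are open conditions and that $\gamma_{\xi^\bot}$ exits transversally, arrange that $\gamma_{\xi^\bot}$ has no point conjugate to $x$ and does not self-intersect on a somewhat larger interval $[-t_{\mathrm{in}}-\delta,t_{\mathrm{out}}+\delta]$. Let $v^\bot\in T_xM$ be the metric dual of $\xi^\bot$, put $K:=\{(x,sv^\bot):|s|\le\max(t_{\mathrm{in}},t_{\mathrm{out}})+\delta\}\subset T\widehat{M}$, and consider $\Psi(z,v):=(z,\exp_z v)$. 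At each point of $K$ the differential $d\Psi$ is block lower triangular with diagonal blocks $\mathrm{Id}$ and $d(\exp_x)_{sv^\bot}$, the latter nonsingular precisely by (b); so $d\Psi$ is nonsingular along $K$ (hence on a neighborhood of it), and $\Psi|_K$ is injective by (c). By the standard fact that a smooth map whose differential is nonsingular at every point of a compact set and which is injective on that set is injective on some open neighborhood of it, there is an open $\mathcal U\supset K$ in $T\widehat{M}$ with $\Psi|_{\mathcal U}$ injective; in particular $\exp_z$ is injective on the slice $\mathcal U\cap T_zM$ for every base point $z$. Now fix $\xi\in U_S$ near $\eta$, write $z=\pi(\xi)$, let $v_i\in T_zM$ be dual to $\omega_i(\xi)$, and let $[-\tau_i^-,\tau_i^+]$ be the maximal interval on which $\gamma_{\omega_i(\xi)}$ stays in $M$. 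As $\xi\to\eta$ one has $v_1\to v^\bot$, $v_2\to-v^\bot$, $(\tau_1^-,\tau_1^+)\to(t_{\mathrm{in}},t_{\mathrm{out}})$ and $(\tau_2^-,\tau_2^+)\to(t_{\mathrm{out}},t_{\mathrm{in}})$, so, regarded as subsets of $T\widehat{M}$, both segments $\{(z,tv_i):t\in[-\tau_i^-,\tau_i^+]\}$ converge to $\{(x,sv^\bot):s\in[-t_{\mathrm{in}},t_{\mathrm{out}}]\}$, a compact subset of the open set $\mathcal U$; hence for $\xi$ close enough to $\eta$ both segments lie in $\mathcal U$. Since $v_1\neq\pm v_2$ (the case $v_1=v_2$ is ruled out by $\omega_1(\xi)\neq\omega_2(\xi)$ and the case $v_1=-v_2$ by $v_1+v_2=t_0\xi\neq0$), these two segments lie in distinct lines through the origin of $T_zM$ and so meet only at $0$; as $\exp_z$ is injective on $\mathcal U\cap T_zM$, the images $\gamma_{\omega_1(\xi)}$ and $\gamma_{\omega_2(\xi)}$ meet only at $\exp_z 0=z$.

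Intersecting the neighborhoods from the last two paragraphs gives a neighborhood of $\eta$ in $S^*M$ every point of which is generated by an admissible geodesic pair, which is the assertion. I expect the main obstacle to be this intersection step: one must convert the pointwise hypotheses ``no conjugate points'' (a local-diffeomorphism statement along a single geodesic) and ``no self-intersection'' (global injectivity of $\exp_x$ on a single segment) into injectivity of the exponential map on a whole tube around that segment, uniformly for base points near $x$, and then make sure --- after passing to the enlargement $\widehat{M}$ --- that the perturbed geodesics, endpoints on $\partial M$ included, genuinely remain inside this tube; the bookkeeping near the endpoints is the delicate point, since both competitor geodesics have an endpoint near $\gamma_{\xi^\bot}(t_{\mathrm{out}})$ and another near $\gamma_{\xi^\bot}(-t_{\mathrm{in}})$.
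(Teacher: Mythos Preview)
Your argument is correct, with one small slip: the compact set $K$ should be the asymmetric segment $\{(x,sv^\bot): s\in[-t_{\mathrm{in}}-\delta,\,t_{\mathrm{out}}+\delta]\}$ rather than the symmetric one with $\max(t_{\mathrm{in}},t_{\mathrm{out}})$, since the extensions of (b) and (c) you arranged are only available on the former interval (going far past the shorter exit time inside $\widehat M$ need not preserve them). This is harmless, because both perturbed segments Hausdorff-converge to $\{(x,sv^\bot): s\in[-t_{\mathrm{in}},t_{\mathrm{out}}]\}$, which already sits in the interior of the asymmetric $K$; the rest of the argument is unaffected.

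The paper uses the same bundle exponential map $E(V)=(\pi(V),\exp_{\pi(V)}V)$ but organizes the proof differently. Rather than establishing injectivity of $E$ on a whole tube around $K$ in advance, it argues by contradiction: one extracts a sequence of unwanted intersections $\gamma_{\xi_1^k}(t_1^k)=\gamma_{\xi_2^k}(t_2^k)$ with $\xi_i^k\to\xi^\bot$, uses the injectivity radius of $M$ to force $|t_1^k|$ (say) to stay bounded away from $0$, passes to limits $t_i^k\to t_i$, invokes (c) to conclude $t_1=t_2=:s\neq 0$, and then invokes (b) to see that $E$ is a local diffeomorphism near $s\xi^\bot$, contradicting $E(t_1^k\xi_1^k)=E(t_2^k\xi_2^k)$ for large $k$. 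Non-self-intersection of the perturbed geodesics is then handled by a second, analogous contradiction argument. Your direct route---prove injectivity of $\Psi$ on a tube once and for all, then observe that both perturbed segments land inside the tube---is cleaner and has the pleasant feature that mutual non-intersection and non-self-intersection fall out in one stroke (injectivity of $\exp_z$ on $\mathcal U\cap T_zM$ makes each line segment $t\mapsto tv_i$ embed, and distinct lines meet only at the origin). The paper's version, on the other hand, makes the role of the injectivity radius explicit and avoids the endpoint $\delta$-bookkeeping in $\widehat M$ that you flagged as the delicate point.
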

\begin{proof}
 Let $\eta,\xi^\bot\in S_x^*M$ be as in the definition of the strict Stefanov-Uhlmann regularity condition. It is sufficient to show that if $(\xi_1,\xi_2)\in S^*M\times S^*M$ is any pair of not equal unit covectors with $\pi(\xi_1)=\pi(\xi_2)$, with $\xi_i$ both sufficiently close to $\xi^\bot$ in $S^*M$, then the geodesic pair $(\gamma_{\xi_1}, \gamma_{\xi_2})$ is an admissible geodesic pair. Indeed, if this is the case, then the covector pairs $I(\xi)=(\omega_1(\xi),\omega_2(\xi))$, with $\xi$ close to $\eta$, that generate a neighborhood of $\eta$, are such that the corresponding geodesic pairs $(\gamma_{\omega_1(\xi)},\gamma_{\omega_2(\xi)})$ are admissible. (Here we used the simple remark that if a geodesic pair $(\gamma_{\omega_1(\xi)},\gamma_{-\omega_2(\xi)})$ is admissible, so is $(\gamma_{\omega_1(\xi)},\gamma_{\omega_2(\xi)})$. We also used the fact that $\omega_1(\xi),-\omega_2(\xi)$ can be chosen to belong to an arbitrary small neighborhood of $\xi^\bot$ by shrinking $U_S$ in~\eqref{nhood_of_eta_to_nhood_of_bot}.)
 
 Let us first reduce all considerations to a sufficiently small neighborhood of $\xi^\bot$ in $S^*M$ such that all the geodesics corresponding to its covectors hit the boundary transversally, and that this happens after a time at most $T<\infty$ (forward and backward in time). This is possible since geodesics depend smoothly on their initial values and $\gamma_{\xi^\bot}$ itself is nontangential. Let us denote $\gamma=\gamma_{\xi^\bot}$.
 
 To show that there is a neighborhood $U$ of $\xi^\bot$ in $S^*M$ such that for all $\xi_1,\xi_2\in U$ with $\xi_1\neq \xi_2$ and $\pi(\xi_1)=\pi(\xi_2)$, the geodesics do not intersect outside $x$, we assume the opposite: Let $(\gamma_{\xi_1}^k,\gamma_{\xi_2}^k)$ be a sequence of geodesic pairs that intersect at times $t_i^k$, 
 \begin{equation}\label{intersect_eq}
 \gamma_{\xi_1^k}(t_1^k)=\gamma_{\xi_2^k}(t_2^k),
 \end{equation}
 with at least one of $t_1^k$ and $t_2^k$ nonzero, $\xi_1^k\neq \xi_2^k$,
 $$
 x_k:=\pi(\xi_1^k)=\pi(\xi_2^k) 
 $$
 and
 $$
 \|\xi_i^k-\xi^\bot\|_{S^*M}<\frac 1k, \ i=1,2.
 $$
 The times $-t_{\mathrm{in}}(\xi_i^k)$ and $t_{\mathrm{out}}(\xi_i^k)$ when the geodesic $\gamma_{\xi_i^k}$ enters and exits $M$ converge to the entrance and exit times $-t_{\mathrm{in}}$ and $t_{\mathrm{out}}$ of $\gamma$ as $k\to \infty$.
 Since $M$ is compact it has positive injectivity radius $\text{Inj}(M)>0$. (Here we have extended $M$ to a closed manifold to speak about $\text{Inj}(M)$. Note that $x_k\in \text{Int}(M)$ for $k$ large so that the boundary will not cause any complications here.) Thus we have
 $$
 \abs{t_1^k}\geq \text{Inj}(M) \mbox{ or } \abs{t_2^k}\geq \text{Inj}(M)
 $$
 for all $k$. Otherwise $\gamma_{\xi_1^k}$ and $\gamma_{\xi_2^k}$ would intersect at a geodesic ball centered at $x_k$. By passing to a subsequence, we may assume without loss of generality that
 $$
 \abs{t_1^k}\geq \text{Inj}(M).
 $$
 
 Since the intersection times $t_i^k$ belong to a compact interval $[-T,T]$, we may pass to another subsequence so that 
 $$
 t_i^k\to t_i, \mbox{ as } k\to\infty, \quad i=1,2.
 $$
 Since $\xi_i^k\to\xi^\bot$ in $S^*M$, we have by taking limit of~\eqref{intersect_eq}
 $$
 \gamma(t_1)=\gamma(t_2).
 $$
 (Recall that we denote $\gamma=\gamma_{\xi^\bot}$.) Since $\gamma$ by assumption has no self-intersections, we have
 \begin{equation}\label{same_time}
 s:=t_1=t_2\in [-t_{\mathrm{in}},t_{\mathrm{out}}].
 \end{equation}
 Since $\abs{t_1}\geq \text{Inj}(M)$, we have $s\neq 0$.

 Now, since by assumption $\gamma$ has no points conjugate to $x=\pi(\xi^\bot)$, we have that there is a neighborhood $U\subset TM$ of $s\xi^\bot$ such that the ``bundle version'' of the exponential map
  \begin{align*}
 E&:U\to M\times M, \\
 E(V)&=(\pi(V),\exp_{\pi(V)}(V)), \quad V\in U
 \end{align*}
 is a diffeomorphism (see e.g.~\cite[Lemma 5.12]{Lee97}). This follows by noting that the differential $DE$ of $E$ at $s\xi^\bot\in T^*M$ is of the form
 $$
 \left[%\def\arraystretch{1.2}%
	\begin{array}{cccc}
	I_{n\times n} & 0 \\
	\# & D\exp_x \\
	\end{array}\right].
 $$
 Here $D\exp_x$ is the differential of the standard exponential map $\exp_x :T_xM\to M$, $x=\pi(s\xi^\bot)$, which is invertible near $s\xi^\bot \in T_xM$.
 We have that $t_i^k\xi_i^k\in U$ for $k$ large enough and for $i=1,2$, since  
 $$
 \xi_1^k,\xi_2^k\to \xi^\bot \mbox{ and } t_1^k,t_2^k\to s.
 $$
 But now we have that
 $$
 E(t_1^k\xi_1^k)=(x_k,\exp_{x_k}(t_1^k\xi_1^k))=(x_k,\exp_{x_k}(t_2^k\xi_2^k))=E(t_2^k\xi_2^k)
 $$
 for $k$ large enough with $t_1^k\xi_1^k\neq t_2^k\xi_2^k$. This is a contradiction to the exponential map $E$ being injective on $U$. 
 
 Thus there exists a neighborhood $U\times U$ of $(\xi^\bot,\xi^\bot)$ in $S^*M\times S^*M$ such that geodesics corresponding to pairs of its covectors intersect the boundary transversally. Moreover, if the covectors of a pair in $U\times U$ are not equal, the corresponding pair of geodesics do not intersect outside their starting point.
 
 We are left to show that we may shrink $U$, if necessary, so that the geodesics corresponding to its covectors do not self-intersect. We assume the opposite: there is a sequence $\xi^k\to \xi^\bot$ in $U\subset S^*M$ such that there are times $t_k<t_k'$ when the geodesic $\gamma_{\xi_k}$ intersects itself:
 $$
 \gamma_{\xi_k}(t_k)=\gamma_{\xi_k}(t_k').
 $$
 Since $\text{Inj}(M)>0$, we must have that $t_k'\geq t_k+ 2\,\text{Inj}(M)$. Otherwise the geodesic loop $\gamma_{\xi_k}:[t_k,t_k']\to M$ would belong to a geodesic ball of radius $\text{Inj}(M)$.
 
 Since $t_k,t_k'$ belongs to a compact time interval $[-T,T]$, we may pass to a subsequence so that
 $$
 t_{k}\to t \mbox{ and } t_{k}'\to t'.
 $$
 Since also $\xi_{k}\to \xi^\bot$, we have that 
 $$
 \gamma_{\xi^\bot}(t)=\gamma_{\xi^\bot}(t'),
 $$
 with $t'-t\geq 2\,\text{Inj}(M)>0$. Since $\gamma_{\xi^\bot}$ by assumption has no self-intersections this is a contradiction. Thus we may redefine $U$ so that it has all the required properties. This concludes the proof.
 \end{proof}
 
Combining our results we record the following. The proof is just the combination of the previous lemma and Lemma~\ref{lemma_crossing_geodesics_parametrization}.
\begin{prop}\label{parametrization_by_admissible}
 Let $(M,g)$ be a compact manifold with boundary, let $\eta\in S_x^*M$, and assume that $M$ satisfies the strict Stefanov-Uhlmann regularity condition at $\eta$.
 Then there is a neighborhood $U_\eta\subset S^*M$ of $\eta$ and a smooth mapping $I:U_\eta\to S^*U\times S^*U$ such that
 $$
 I(\xi)=(\omega_1(\xi), \omega_2(\xi)), \quad \omega_1(\xi) + \omega_2(\xi) = t_0 \xi 
$$
with
$$
(\gamma_{\omega_1(\xi)},\gamma_{\omega_1(\xi)})
$$
an admissible geodesic pair. Here $0<t_0<2$ is constant.
\end{prop}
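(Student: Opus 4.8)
The plan is simply to glue together Lemma~\ref{lemma_crossing_geodesics_parametrization} and the preceding lemma of this section, using the explicit covectors constructed in~\eqref{generates_eta}. Fix $\eta\in S^*_xM$ and let $\xi^\bot\in S^*_xM$ be the unit covector orthogonal to $\eta$ furnished by the strict Stefanov--Uhlmann condition, so that $\gamma_{\xi^\bot}$ is nontangential on $[-t_{\mathrm{in}},t_{\mathrm{out}}]$, contains no point conjugate to $x$, and does not self-intersect. For a fixed small $\eps>0$ set $\xi_1=\xi^\bot+\eps\eta$, $\xi_2=-\xi^\bot+\eps\eta$ and $\zeta_i=\hat\xi_i=\xi_i/\abs{\xi_i}$; by~\eqref{generates_eta} we have $\zeta_1+\zeta_2=t_0\eta$ with $t_0=t_0(\eps)\in(0,2)$, and from now on this $t_0$ is a fixed constant.

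First I would apply Lemma~\ref{lemma_crossing_geodesics_parametrization} with this choice of $(\zeta_1,\zeta_2)$. It produces a neighborhood $U_S$ of $\eta$ in $S^*M$ and a smooth map $I(\xi)=(\omega_1(\xi),\omega_2(\xi))$ with $\omega_i(\eta)=\zeta_i$ and $\omega_1(\xi)+\omega_2(\xi)=t_0\xi$ for all $\xi\in U_S$; this already gives every structural claim of the proposition except admissibility of the geodesic pairs. Next, by~\eqref{perturbation_close} the covectors $\zeta_1$ and $-\zeta_2$ lie within distance $\sim\eps$ of $\xi^\bot$, and for $\eps>0$ small one has $\zeta_1\neq\zeta_2$ and $\zeta_1\neq-\zeta_2$; since $I$ is continuous we may shrink $U_S$ so that, as in~\eqref{nhood_of_eta_to_nhood_of_bot}, $I(U_S)\subset U\times(-U)$ for a prescribed small neighborhood $U$ of $\xi^\bot$ in $S^*M$, and also so that $\omega_1(\xi)\neq\pm\omega_2(\xi)$ and $\pi(\omega_1(\xi))=\pi(\omega_2(\xi))$ throughout $U_S$.

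Now I invoke the preceding lemma: under the strict Stefanov--Uhlmann condition at $\eta$ there is a neighborhood $U$ of $\xi^\bot$ in $S^*M$ such that every pair of distinct, cobased unit covectors in $U$ generates an admissible geodesic pair. Choosing $U$ in this way in the previous step, for each $\xi\in U_S$ the covectors $\omega_1(\xi)$ and $-\omega_2(\xi)$ are distinct, lie in $U$, and have the same base point, so $(\gamma_{\omega_1(\xi)},\gamma_{-\omega_2(\xi)})$ is admissible; since reversing the parametrization of a geodesic does not change its image, $(\gamma_{\omega_1(\xi)},\gamma_{\omega_2(\xi)})$ is admissible as well. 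Setting $U_\eta=U_S$ then finishes the proof. There is no essential obstacle here beyond bookkeeping; the only points requiring a little care are (i) that $t_0$ may be taken constant, which is arranged by fixing $\eps$ once and for all before invoking Lemma~\ref{lemma_crossing_geodesics_parametrization}, and (ii) propagating the distinctness conditions $\omega_1\neq\pm\omega_2$ and the inclusion $I(U_S)\subset U\times(-U)$ from $\xi=\eta$ to a whole neighborhood, which is immediate from continuity of $I$ since these hold strictly at $\eta$.
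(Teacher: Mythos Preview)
Your proof is correct and follows precisely the route the paper takes: the paper's own proof is the single sentence ``The proof is just the combination of the previous lemma and Lemma~\ref{lemma_crossing_geodesics_parametrization},'' and you have spelled out that combination in detail, including the same observations about~\eqref{nhood_of_eta_to_nhood_of_bot} and the invariance of admissibility under reversing $\omega_2\mapsto-\omega_2$.
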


This result will imply that by using Gaussian beams traveling along $\gamma_{\omega_1(\xi)}$ and $\gamma_{\omega_2(\xi)}$ we can detect singularities in the direction of $\eta$.

\section{Gaussian beam quasimodes} \label{sec_quasimodes}
In this section we discuss Gaussian beam quasimodes. Gaussian beam quasimodes are approximate eigenfunctions for the Laplace-Beltrami operator $\Delta_g$ on the transversal manifold $(M,g)$, $\dim(M)=m=n-1$. These were studied in~\cite{DKLS} and we recall some facts from there. 

We will first recall the Gaussian beam quasimode construction on any compact transversal manifold $(M,g)$ with boundary and for any nontangential geodesic $\gamma$. After this we will introduce smooth parameterizations for the quasimodes, and products of quasimodes, by cotangent vectors $\xi\in T^*M$.

\begin{prop} \label{prop_gaussianbeam_quasimode}
Let $\gamma: [0,L] \to M$ be a nontangential geodesic, and let $\lambda \in \mR$. There is a family of functions $(\hat{v}_s) \subset C^{\infty}(M)$, where $s = \tau+i\lambda$ and $\tau \geq 1$, such that 
\[
\norm{(-\Delta_{g} - s^2) \hat{v}_s}_{L^2(M)} = O(\tau^{-\infty}), \quad \norm{\hat{v}_s}_{L^2(M)} = O(1)
\]
as $\tau \to \infty$.
\end{prop}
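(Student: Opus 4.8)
The plan is to build $\hat v_s$ by a complex WKB (Gaussian beam) ansatz concentrated along $\gamma$, following the construction recalled from~\cite{DKLS}. First I would enlarge $(M,g)$ to an open manifold $(\widehat M,g)$ in which $\gamma$ extends to a geodesic defined on $[-\delta,L+\delta]$ with both endpoints outside $M$, and introduce Fermi (semigeodesic) coordinates $(t,y)\in(-\delta,L+\delta)\times\mR^{m-1}$ in a tube around $\gamma$, so that $\gamma=\{y=0\}$, $g_{11}\equiv1$, $g_{1k}|_{y=0}=0$, $g_{jk}|_{y=0}=\delta_{jk}$ and $\partial_{y_\ell}g_{jk}|_{y=0}=0$. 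If $\gamma$ self-intersects these coordinates live on an abstract tube that is only immersed in $M$; the quasimode produced below is supported in the tube, and near each self-intersection point one simply adds the finitely many local contributions, which does not affect the estimates. I would then look for
\[
\hat v_s=\tau^{(m-1)/4}\,e^{is\Theta(t,y)}\,\chi(y)\,a(t,y,s),\qquad a(t,y,s)\sim\sum_{j\ge0}s^{-j}a_j(t,y),
\]
with $\chi$ a cutoff to $\{\abs{y}<\delta'\}$, with $\Theta$ a complex phase to be chosen so that $\mathrm{Im}\,\Theta\ge0$ and vanishes exactly to second order on $\gamma$, and with $a_j\in C^\infty$.

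Next I would fix $\Theta$ by solving the eikonal equation approximately along $\gamma$. From
\[
e^{-is\Theta}(-\Delta_g-s^2)(e^{is\Theta}b)=s^2\big(\langle d\Theta,d\Theta\rangle_g-1\big)b-is\big(2\langle d\Theta,db\rangle_g+(\Delta_g\Theta)b\big)-\Delta_g b
\]
one sees it suffices to arrange $\langle d\Theta,d\Theta\rangle_g-1=O(\abs{y}^N)$ for a prescribed $N$. Writing $\Theta(t,y)=t+\tfrac12 H(t)y\cdot y+O(\abs{y}^3)$, the terms of order $0$ and $1$ in $y$ vanish automatically for this ansatz, while the order-$2$ term (with $H$ taken symmetric, as one may) yields the matrix Riccati equation $\dot H+H^2+R(t)=0$, with $R(t)$ assembled from the curvature of $g$ along $\gamma$. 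The hard part is solving this on all of $[0,L]$ while keeping $\mathrm{Im}\,H(t)$ positive definite; I would do this via the classical substitution $H=\dot Y Y^{-1}$, where $(Y,\dot Y)$ solves the linear Jacobi system with $Y(0)=I$ and $\dot Y(0)=H(0)$ chosen with positive definite imaginary part. A Wronskian-type identity ($Y^*\dot Y-\dot Y^* Y$ is constant and equal to $2i\,\mathrm{Im}\,H(0)$) shows that $Y(t)$ never becomes singular and that $\mathrm{Im}\,H(t)$ stays positive definite, so $H$ is defined on all of $[0,L]$ regardless of conjugate points or self-intersections of $\gamma$. The higher Taylor coefficients of $\Theta$ in $y$ are then obtained by solving successive linear ODEs along $\gamma$, giving the desired $\langle d\Theta,d\Theta\rangle_g-1=O(\abs{y}^N)$.

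With $\Theta$ in hand I would solve the transport equations. The coefficient of $s^1$ gives $2\langle d\Theta,da_0\rangle_g+(\Delta_g\Theta)a_0=O(\abs{y}^N)$; restricted to $\gamma$ this is a linear first-order ODE for $a_0(t,0)$, which I would solve with $a_0(\cdot,0)$ nowhere vanishing, and the transversal Taylor coefficients of $a_0$ follow from further linear ODEs. The coefficients of $s^0,s^{-1},\dots$ give the inhomogeneous transport equations $2\langle d\Theta,da_j\rangle_g+(\Delta_g\Theta)a_j=i\Delta_g a_{j-1}+O(\abs{y}^N)$, solved recursively in the same way. A finite truncation $\sum_{j\le N'}s^{-j}a_j$ then makes $e^{-is\Theta}(-\Delta_g-s^2)(e^{is\Theta}\chi a)$ equal, on $\supp\chi$, to a sum of terms of the form $s^{\ell}O(\abs{y}^N)$ with $\ell\le2$ together with the single term $-s^{-N'}\Delta_g a_{N'}$ and the contribution of $d\chi$ (supported where $\abs{y}\gtrsim\delta'$). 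Using $\abs{e^{is\Theta}}\lesssim e^{-c\tau\abs{y}^2}$, the Gaussian bound $\int_{\mR^{m-1}}\abs{y}^{2N}e^{-c\tau\abs{y}^2}\,dy\lesssim\tau^{-(m-1)/2-N}$, and the exponential smallness $e^{-c\tau(\delta')^2}$ on $\supp d\chi$, one gets $\norm{(-\Delta_g-s^2)\hat v_s}_{L^2(M)}=O(\tau^{-K})$ with $K=K(N,N')\to\infty$. Replacing the finite sums by a genuine classical symbol $a(t,y,s)\sim\sum_j s^{-j}a_j$ via Borel summation upgrades this to $O(\tau^{-\infty})$. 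Finally the same Gaussian integral with $N=0$ gives $\norm{\hat v_s}_{L^2(M)}^2\sim\tau^{(m-1)/2}\int_0^L\abs{a_0(t,0)}^2\Big(\int_{\mR^{m-1}}e^{-2c\tau\abs{y}^2}\,dy\Big)\,dt\sim1$, so $\norm{\hat v_s}_{L^2(M)}=O(1)$. I expect the global solvability of the Riccati equation together with the preservation of $\mathrm{Im}\,H>0$ to be the main obstacle; the rest is routine bookkeeping of the $O(\tau^{-\infty})$ errors, for which the Gaussian concentration of the beam is what makes the truncation and cutoff errors negligible.
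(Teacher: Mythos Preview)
Your proposal is correct and follows essentially the same Gaussian beam construction as the paper (both recalling \cite{DKLS}): Fermi coordinates along $\gamma$, a complex phase $\Theta=t+\tfrac12 H(t)y\cdot y+\cdots$ with $H$ solving the matrix Riccati equation and $\mathrm{Im}\,H>0$ preserved, recursive transport equations for the amplitude, and Borel summation to get $O(\tau^{-\infty})$ errors. The only cosmetic difference is emphasis: you spell out the Riccati solvability via $H=\dot Y Y^{-1}$ and the Wronskian identity (which the paper outsources to \cite{KKL}), while the paper spells out the Borel summation with explicit cutoffs $\eta(\tau/\lambda_j)$ and the choice \eqref{lambdaj_choice} (which you treat as standard).
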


This result was proved in \cite[Section 3]{DKLS} with $O(\tau^{-K})$ error estimates for large $K > 0$. We recall now the construction and also the standard extension to $O(\tau^{-\infty})$ error estimates using Borel summation.

To describe the quasimode $\hat{v}_s$, first embed $(M,g)$ in some closed manifold $(\widehat{M},g)$ and extend $\gamma$ as a unit speed geodesic in $\widehat{M}$. Since $\gamma$ is nontangential, there is an $\eps > 0$ such that $\gamma(t) \in \widehat{M} \setminus M$ for $t \in [-2\eps,0) \cup (L,L+2\eps)$. Assume for simplicity that $\gamma|_{[-2\eps,L+2\eps]}$ does not self-intersect. (For the general case see~\cite{DKLS}.)

Choose an orthonormal frame at $\gamma(0)$ with its first vector as $\dot{\gamma}(0)$. Then there exists a set of Fermi coordinates $(t,y)$ on a \emph{$\delta'$-neighborhood} 
\begin{equation}\label{delta_nhood}
U_{\delta'} = \{(t,y) \,;\, -2\eps < t < L+2\eps, \ \ \abs{y} < \delta' \}
\end{equation}
of the $\gamma$ curve for some $\delta' > 0$. In these coordinates the geodesic curve $\gamma(t)$ is mapped to $(t,0)$, and 
\[
g^{jk}|_{y=0} = \delta^{jk}, \ \ \partial_i g^{jk}|_{y=0} = 0.
\]
We write $x = (t,y)$ where $t = y^1$ and $y = (y^2, \ldots, y^{m})$.

The quasimode will have the form 
\[
\hat{v}_s = e^{is\widehat{\Theta}} \hat{a}_s.
\]
The phase function $\widehat{\Theta}$ was constructed in \cite{DKLS} and is given in Fermi coordinates  by the expression 
\[
\widehat{\Theta}(t,y) = t + \frac{1}{2} H(t)y \cdot y + \widetilde{\Theta}(t,y),
\]
where $(t,y)\in U_{\delta'}$.
The smooth $(m-1)\times (m-1)$ matrix function $H(t)$ solves the Riccati equation 
\begin{equation}\label{Riccati}
\dot{H}(t) + H(t)^2 = F(t), \qquad H(0) = H_0
\end{equation}
on the transversal manifold $M$, $\dim(M)=m=n-1$. Here $F(t)$ is a smooth matrix function involving second derivatives of the metric $g$, and $H_0$ is some complex symmetric matrix with $\mathrm{Im}(H_0)$ positive definite. Then $\mathrm{Im}(H(t))$ stays positive definite for all $t$~\cite[Lemma 2.56]{KKL}. Also, $\widetilde{\Theta}(t,y)= O(\abs{y}^3)$.

Fix a function $\eta \in C^{\infty}_c(\mR)$ with $0\leq \eta\leq 1$, $\eta(t) = 1$ for $\abs{t} \leq 1$ and $\eta(t) = 0$ for $\abs{t} \geq 2$, and let $(\lambda_j)_{j=0}^{\infty}$ be a sequence with $0 < \lambda_0 < \lambda_1 < \ldots \to \infty$. As the amplitude $\hat{a}_s$, we choose the function 
\[
\hat{a}_s(t,y) = \tau^{\frac{m-1}{4}} \sum_{j=0}^{\infty} (1-\eta(\tau/\lambda_j)) s^{-j} \hat{a}_{-j}(t,y) \chi(y/\delta').
\]
The smooth functions $\hat{a}_{-j}(t,y)$, $j \geq 0$, were constructed in \cite{DKLS} as solutions of transport equations, and they are independent of $s$. For any fixed $s$ the above sum is finite, and thus $\hat{a}_s \in C^{\infty}(M)$. We choose the numbers $\lambda_j$ to be so large so that for each $j \geq 0$ and for $\tau \geq 2$, 
\begin{equation} \label{lambdaj_choice}
\max_{0 \leq l \leq j} \ \norm{(1-\eta(\tau/\lambda_j)) \nabla^l [\hat{a}_{-j}(t,y) \chi(y/\delta')]}_{L^{\infty}(M)} \leq \tau.
\end{equation}
Here we used that $\abs{1-\eta(t)}\leq \abs{t}$ for all $t\in \mathbb{R}$.
The estimate \eqref{lambdaj_choice} implies that, for $l \geq 0$, 
\[
\tau^{-\frac{m-1}{4}} \abs{\nabla^l \hat{a}_s} = \left\lvert \left[ \sum_{j=0}^l + \sum_{j=l+1}^{\infty} \right] (1-\eta(\tau/\lambda_j)) s^{-j} \nabla^l [ \hat{a}_{-j}(t,y) \chi(y/\delta') ] \right\rvert \leq C_l
\]
with $C_l$ independent of $s$. Here we used direct $L^\infty$ bound for the first sum. For the second sum we used~\eqref{lambdaj_choice} and summation of geometric series. Similarly, if $N \geq 0$ and $0 \leq l \leq N$ we have 
\begin{multline*}
\abs{\nabla^l (\tau^{-\frac{m-1}{4}} \hat{a}_s - \sum_{j=0}^N s^{-j} \hat{a}_{-j} \chi(y/\delta'))} \leq \Big\lvert \sum_{j=0}^N \eta(\tau/\lambda_j) s^{-j} \nabla^l [ \hat{a}_{-j}(t,y) \chi(y/\delta') ] \Big\rvert  \\
 + \Big\lvert \sum_{j=N+1}^{\infty} (1-\eta(\tau/\lambda_j)) s^{-j} \nabla^l [ \hat{a}_{-j}(t,y) \chi(y/\delta') ] \Big\rvert
\end{multline*}
which is seen to be $\leq C_N \tau^{-N}$ by using the fact that $\eta(t) \leq C_N t^{-N}$ for all $t\in \R$ in the first sum and the estimate \eqref{lambdaj_choice} in the second sum. This shows that 
\[
\tau^{-\frac{m-1}{4}} \hat{a}_s(t,y) \sim \sum_{j=0}^{\infty} s^{-j} \hat{a}_{-j}(t,y) \chi(y/\delta')
\]
in the sense of an asymptotic sum of semiclassical symbols.

From the amplitude function $\hat{a}_s$, we see that $\hat{v}_s$ is supported on $U_\delta'$. For future reference, we remark that choosing $\delta'$ smaller does not change the construction. The same functions $\widehat{\Theta}$ and $\hat{a}_{-j}$ will do in that case.

The above construction and \cite{DKLS} (see the proof of Proposition 3.1) imply that $\norm{\hat{v}_s}_{L^2(M)} \leq C$ with $C$ independent of $s$. The error term $f = (-\Delta-s^2)\hat{v}_s$ describing how $\hat{v}_s$ departs from a true eigenfunction is of the form $f = f_1 + f_2$, where 
\begin{align*}
f_1 &= (-\Delta-s^2) (e^{is\widehat{\Theta}} \tau^{\frac{m-1}{4}} (\hat{a}_0 + s^{-1} \hat{a}^{-1} + \ldots + s^{-N} a_{-N}) \chi(y/\delta')), \\
f_2 &= (-\Delta-s^2) (e^{is\widehat{\Theta}} [\hat{a}_s - \tau^{\frac{m-1}{4}} (\hat{a}_0 + s^{-1} \hat{a}^{-1} + \ldots + s^{-N} a_{-N}) \chi(y/\delta')]).
\end{align*}
Here $\norm{f_1}_{L^2(M)} \leq C_N \tau^{\frac{3-N}{2}}$ as in \cite[Proof of Proposition 3.1]{DKLS}, and $\norm{f_2}_{L^2(M)} \leq C_N \tau^{2-N}$ by the above symbol estimates. After replacing $N$ by $2N+3$, we obtain that for any $N$ there is $C_N$ with 
\[
\norm{(-\Delta-s^2) \hat{v}_s}_{L^2(M)} \leq C_N \tau^{-N}
\]
as required.
\subsection{Parametrization of quasimodes}
Next we parametrize locally the quasimodes and products of pairs of quasimodes by cotangent vectors $(z,\xi)\in T^*M\setminus \{0\}$ corresponding to nontangential geodesics. This will be done in four steps. %In this section the expansion parameter $N$ is fixed. %We assume without further stating that the completeness condition of Definition~\ref{SU_compl} holds.
In steps (3) and (4) we assume $(z_0,\xi_0)$ is generated by a pair of nontangential geodesics.
%When the strict Stefanov-Uhlmann regularity condition, Definition~\ref{SU_compl}, is assumed to hold, the geodesic pairs used in the construction can be taken to be admissible, i.e. pairs of nontangential geodesics that intersect only $z=\pi(\xi)$.

\medskip

\noindent {\bf (1)} We define the function $\hat{v}_s$ by
  $$
  \hat{v}_s=\hat{v}_s(\xi,F,x)=e^{is\widehat{\Theta}(\xi,F,x)}\hat{a}_s(\xi,F,x), 
  $$
  with
  $$
  \xi\in  S_z^*M,\quad  F\in FO_\xi(M), \quad x\in M
  $$
  as the Gaussian beam quasimode with complex energy $s^2$ of Proposition~\ref{prop_gaussianbeam_quasimode} above with the following data:

The covector $\xi \in S_z^*M$ defines a non-tangential geodesic $\gamma_\xi$ with initial direction $\xi$. The orthonormal coframe 
$$
F\in FO_\xi(M):=\{F=(F_1,\ldots,F_n)\in FO_z^*(M): F_1=\xi \}
$$ is an orthonormal coframe for $T_z^*M$ with its first covector as $\xi$. Here $FO_z^*(M)$ denotes the set of coframes at $z$. The frame $F$ is used to define Fermi coordinates on an open $\delta'$-neighborhood of $\gamma_\xi$ as in~\eqref{delta_nhood}. We choose $H_0=i I_{\tiny{(n-2)\times (n-2)}}$ as the initial value for the Riccati equation~\eqref{Riccati} in the Fermi coordinates around $\gamma_\xi$. Here $i$ is the imaginary unit. 

The ``hat'' on top of the functions $v_s, \Theta$ and $a_s$ are here to differentiate them from the functions we introduce next.

\medskip

\noindent {\bf (2)} We set for $\xi\in T_z^*M\setminus \{0\}$
  $$
  \hat{\xi}=\frac{\xi}{\abs{\xi}_{g(z)}}
  $$
  and define the function $v_s$ by
  \begin{equation}\label{polyhomogenization_of_gb}
  v_s(\xi,F,x)=e^{i\tau\Theta(\xi,F,x)}e^{-\lambda\widehat{\Theta}(\hat{\xi},F,x)}a_s(\xi,F,x), 
  \end{equation}
  where $ F\in FO_{\hat{\xi}}(M)$, $x\in M$, and
  $$
  \Theta(\xi,F,x):=\abs{\xi}_g\widehat{\Theta}(\hat{\xi},F,x),
  $$
  and
    \begin{multline*}
  \quad  \tau^{-\frac{n-1}{4}}a_{s}(\xi,F,x):=\tau^{-\frac{n-1}{4}}\hat{a}_{s_{\abs{\xi}}}(\hat{\xi},F,x) \\ \sim \hat{a}_0(\hat{\xi},F,x)+s_{\abs{\xi}}^{-1}\hat{a}_{-1}(\hat{\xi},F,x)+ s_{\abs{\xi}}^{-2}\hat{a}_{-2}(\hat{\xi},F,x)+\cdots
  \end{multline*}
  with
  $$
  s_{\abs{\xi}}=\tau\abs{\xi}+i\lambda.
  $$
  
  We call the function $v_s$ the polyhomogenization of a Gaussian beam $\hat{v}_s$ by $\abs{\xi}_g$. This function is just the result of setting the complex frequency $s$ as the $x$-independent constant
  $
  \tau\abs{\xi}_{g(z)}+i\lambda
  $
  instead of $\tau+i\lambda$ in the construction of the Gaussian beam above. 
  
  The polyhomogenization is just for technical purposes since we need our presentation to be compatible with the work~\cite{WZ}, whose results we wish to use without modifications.

  \medskip

\noindent {\bf (3)} Let $(z_0,\xi_0)\in T^*M\setminus\{0\}$ be a point and direction of interest such that $(z_0,\xi_0)$ has a neighborhood generated by nontangential geodesics. Next we consider the products of Gaussian beams of complex frequency
  $$
  s_j=\tau|\xi|_g+i\lambda_j, \quad j=1,2.
  $$
  This results in a function of the form
  $$
  e^{i\tau\Phi(\xi,\overline{F},x)}A_{\tau,\lambda_1,\lambda_2}(\xi,\overline{F},x).
  $$
  with phase function $\Phi$ as
  $$
  \Phi(\xi,\overline{F},x)=\Theta(\abs{\xi}\omega_1(\hat{\xi}),F_1,x)+\Theta(\abs{\xi}\omega_2(\hat{\xi}),F_2,x)
  $$
  and amplitude function $A$ as
  \begin{multline}
  A_{\tau,\lambda_1,\lambda_2}(\xi,\overline{F},x)=e^{-\lambda_1\hat{\Theta}(\omega_1(\hat{\xi}),F_1,x)-\lambda_2\hat{\Theta}(\omega_2(\hat{\xi}),F_2,x)} \\ \times a_{s_1}(\abs{\xi}\omega_1(\hat{\xi}),F_1,x) a_{s_2}(\abs{\xi}\omega_2(\hat{\xi}),F_2,x).
  \end{multline}
  Here the variables are as
  $$
  \xi\in \mathcal{U}\subset T^*M, \quad \overline{F}=(F_1,F_2), F_i\in FO_{\omega_i(\hat{\xi})}(M), \quad  \mbox{and } x\in M,
  $$
  where $\omega_i(\hat{\xi})$, $i=1,2$, are the component functions of the parametrization introduced in Lemma~\ref{lemma_crossing_geodesics_parametrization} satisfying
  \begin{equation}\label{crossing_geo}
  \omega_1(\hat{\xi})+\omega_2(\hat{\xi})=t_0\hat{\xi}.
  \end{equation}
  Above $\mathcal{U}$ is the conic extension of the neighborhood $U_S$ of $\hat{\xi}_0$ introduced in Lemma~\ref{lemma_crossing_geodesics_parametrization}: 
  $$
  \mathcal{U}=\{t\xi\in T^*M: \xi\in U_S, t\in (0,\infty)\}.
  $$
  Thus $\mathcal{U}$ is a neighborhood of $(z_0,\xi_0)$ in $T^*M$.

  \medskip

\noindent {\bf (4)} Lastly we parametrize the orthonormal frames $F_i\in FO_{\omega_i(\hat{\xi})}(M)$, $i=1,2$, by unit cotangent vectors.
We define the function $u_\tau(\xi,x)$ that corresponds to products of pairs of Gaussian beams parametrized by $\xi$ as 
  \begin{equation}\label{def_of_u_tau}
  u_\tau(\xi,x)=e^{i\tau\Phi(\xi,F(\hat{\xi}),x)}A_{\tau,\lambda_1,\lambda_2}(\xi,F(\hat{\xi}),x),
  \end{equation}
  where
  $$
    \xi\in  \mathcal{U}\subset T^*M, \quad x\in M.
  $$
  Here 
  $$
  F(\hat{\xi})=\left(F(\omega_1(\hat{\xi})),F(\omega_2(\hat{\xi}))\right)
  $$ 
  is a parametrization of orthonormal frames given by Lemma~\ref{frame_parametrization} in the appendix. The parametrization is defined on an open neighborhood of the direction of interest $(z_0,\hat{\xi}_0)$ in $S^*M$. By shrinking $U_S$ of Lemma~\ref{lemma_crossing_geodesics_parametrization}, we can take $\mathcal{U}$ to be the conic extension of $U_S$ as above.

 We record the following facts.
 \begin{lemma}\label{basics_of_param_prod}
The polyhomogenization $v_s(\xi,F,x)$ of the Gaussian beam $\hat{v}_s(\xi,F,x)$ is an approximate eigenfunction of complex frequency
  $$
  s=\tau\abs{\xi}_g+i\lambda
  $$
  in the sense that it satisfies:
  $$
  \norm{(-\Delta-s^2)v_s}_{L^2(M)} = O((\tau\abs{\xi})^{-\infty}) \mbox{ and } \norm{v_s}_{L^2(M)} = O(1)
  $$
  for $\tau\abs{\xi}$ large.
  
  Let $(z_0,\xi_0)\in T^*_{z_0}M\setminus\{0\}$ be a point and direction of interest. Then there is a neighborhood $\mathcal{U}$ of $(z_0,\xi_0)$ in $T^*M$ such that $u_\tau(\xi,x)$ is defined on $\mathcal{U}\times M$ and depends smoothly on its variables $\xi\in \mathcal{U}$, $x\in M$.

%\noindent (1) The function $u_\tau(\xi,x)$ that corresponds to products of pairs of Gaussian beams parametrized by $\xi$ depends smoothly on its variables $\xi\in \mathcal{U}$, $x\in M$.

%\noindent (2) Assume that the direction of interest $(z_0,\xi_0)$ satisfies the strict Stefanov-Uhlmann regularity condition of Definition~\ref{SU_compl}. Then the geodesic pairs used in the definition of $u_\tau$ can be taken to be admissible.
\end{lemma}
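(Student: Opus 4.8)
The plan is to reduce both assertions to facts already established — Proposition~\ref{prop_gaussianbeam_quasimode} for the quasimode estimates, and smooth dependence of the Gaussian beam data on the initial covector and frame for the smoothness statement. For the first assertion the point is the identity relating the polyhomogenization to an ordinary Gaussian beam. Set $s_{\abs{\xi}} = \tau\abs{\xi}_g + i\lambda$. Using $\Theta(\xi,F,x) = \abs{\xi}_g\widehat{\Theta}(\hat{\xi},F,x)$ we have
\[
e^{i\tau\Theta(\xi,F,x)}e^{-\lambda\widehat{\Theta}(\hat{\xi},F,x)} = e^{i\tau\abs{\xi}_g\widehat{\Theta}(\hat{\xi},F,x) - \lambda\widehat{\Theta}(\hat{\xi},F,x)} = e^{i s_{\abs{\xi}}\widehat{\Theta}(\hat{\xi},F,x)},
\]
and since $a_s(\xi,F,x) = \hat{a}_{s_{\abs{\xi}}}(\hat{\xi},F,x)$ by definition \eqref{polyhomogenization_of_gb}, we get $v_s(\xi,F,x) = \hat{v}_{s_{\abs{\xi}}}(\hat{\xi},F,x)$. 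In other words $v_s$ is exactly the Gaussian beam of Proposition~\ref{prop_gaussianbeam_quasimode} along the nontangential geodesic $\gamma_{\hat{\xi}}$, built with complex frequency $s_{\abs{\xi}}$ of real part $\tau\abs{\xi}_g$ and imaginary part $\lambda$. Applying that proposition with $\tau$ there replaced by $\tau\abs{\xi}_g$ (valid once $\tau\abs{\xi}_g \geq 1$) gives $\norm{(-\Delta - s^2)v_s}_{L^2(M)} = O((\tau\abs{\xi}_g)^{-\infty})$ and $\norm{v_s}_{L^2(M)} = O(1)$, which is the claim. (If one wants these bounds uniformly as $\xi$ ranges over a compact set away from the zero section with $\gamma_{\hat{\xi}}$ nontangential, one checks that the ingredients of Section~\ref{sec_quasimodes} — the Fermi tube radius $\delta'$, the exit times, and the choice of the $\lambda_j$ in \eqref{lambdaj_choice} — can be taken uniform over such a family, since geodesics and the attached data depend continuously on initial conditions and nontangentiality is an open condition.)

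For the smoothness of $u_\tau$, we unwind \eqref{def_of_u_tau}: $u_\tau(\xi,x)$ arises from the phase $\Phi$ and amplitude $A_{\tau,\lambda_1,\lambda_2}$ of step (3) upon inserting the frame $F(\hat{\xi}) = (F(\omega_1(\hat{\xi})),F(\omega_2(\hat{\xi})))$, where $\omega_1,\omega_2$ are the smooth covector parametrizations of Lemma~\ref{lemma_crossing_geodesics_parametrization} (or Proposition~\ref{parametrization_by_admissible}) and $F(\,\cdot\,)$ is the frame parametrization of Lemma~\ref{frame_parametrization}. Each constituent is smooth: $\xi \mapsto \hat{\xi} = \xi/\abs{\xi}_g$ on $T^*M\setminus\{0\}$, the maps $\omega_i$ and $F$ by construction, and — the essential point — the Gaussian beam phase $\widehat{\Theta}(\zeta,F,\,\cdot\,)$ and the amplitude profiles $\hat{a}_{-j}(\zeta,F,\,\cdot\,)$ depend smoothly on the initial covector $\zeta$ and frame $F$ for $\zeta$ near $\omega_i(\hat{\xi}_0)$. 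This last fact follows by tracking parameter dependence through Section~\ref{sec_quasimodes}: geodesics are smooth in their initial conditions, hence so are the Fermi coordinates built from $(\zeta,F)$; in the Riccati equation \eqref{Riccati} the coefficient $F(t)$ depends smoothly on $\zeta$ while the initial datum $H_0 = iI$ is fixed, so $H(t)$ and thus $\widehat{\Theta}$ vary smoothly with $\zeta$; and the transport equations for the $\hat{a}_{-j}$ have coefficients smooth in $(\zeta,F)$, hence smooth solutions. For each fixed $\tau$ the amplitude $\hat{a}_s$ is a \emph{finite} sum of (cutoffs times) the $\hat{a}_{-j}$, so no summability issue obstructs smoothness. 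It then remains to fix $\mathcal{U}$: take it to be the conic extension of a sufficiently small neighborhood $U_S$ of $\hat{\xi}_0$ from Lemma~\ref{lemma_crossing_geodesics_parametrization}, shrunk so that for all $\xi \in \mathcal{U}$ the geodesics $\gamma_{\omega_1(\hat{\xi})},\gamma_{\omega_2(\hat{\xi})}$ remain nontangential and admit Fermi tubes of a common fixed radius (and, if one prefers the non-self-intersecting form of the construction to its general version from \cite{DKLS}, also non-self-intersecting, which is available by Proposition~\ref{parametrization_by_admissible}). With such $\mathcal{U}$, $u_\tau$ is a finite composition and product of smooth maps on $\mathcal{U}\times M$, hence smooth.

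The main obstacle is precisely the smooth dependence of the Gaussian beam phase and amplitudes on the initial covector and frame invoked above. Though expected, it genuinely requires revisiting each step of the construction in Section~\ref{sec_quasimodes}, including its behaviour near the points where $\gamma$ exits $M$, together with the compactness argument needed to produce a uniform Fermi tube radius and uniform exit times as the initial data vary near $(z_0,\xi_0)$. Everything else is bookkeeping.
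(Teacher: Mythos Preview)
Your proof is correct and follows essentially the same approach as the paper: for the first assertion both you and the paper reduce to Proposition~\ref{prop_gaussianbeam_quasimode} with $\tau$ replaced by $\tau\abs{\xi}_g$ (you make the identity $v_s = \hat{v}_{s_{\abs{\xi}}}$ explicit, whereas the paper just states it), and for the second both argue that the parametrizations $\omega_i$, $F$, the Fermi coordinates, the Riccati solution, and the transport-equation solutions all depend smoothly on the data, the paper additionally citing \cite[Lemma~2.56]{KKL} and \cite[Section~1.6]{T_Book1} for the ODE dependence. Your extra remarks on uniformity of the Fermi tube radius and exit times are not in the paper's proof but are reasonable caveats.
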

\begin{proof}
The first claim is just the construction of Gaussian beam of Proposition~\ref{prop_gaussianbeam_quasimode} with $\tau\abs{\xi}_{g(z)}+i\lambda$ in place of $\tau+i\lambda$.

For the second one, we note that the parameterizations $\omega_i(\hat{\xi})$ and $F(\omega_i(\hat{\xi}))$ are smooth functions of $\xi\neq 0$. Likewise, the Fermi coordinates depend smoothly on this data. Gaussian beams in the corresponding Fermi coordinates are smooth in $x$, and thus smooth functions in general.

Moreover, the solution of the Riccati equation used in the construction of a Gaussian beam depends smoothly on its initial value $H_0$ and its metric dependent coefficients. This can be seen from~\cite[Lemma 2.56]{KKL} where the solution to the Riccati equation is given by a solution to a (linear) system of ordinary differential equations. Solutions to that type of equation depends smoothly on the coefficients of the equation and on the initial data (see e.g.~\cite[Section 1.6]{T_Book1}). The components $\hat{a}_{-j}$ of the amplitude functions used to define $u_\tau(\xi,x)$ are constructed by solving transport equations. These equations reduce to a system of ordinary differential equations for the ``$t$-dependent'' coefficients as can be seen from~\cite[Proposition 3.1]{DKLS}. Combining these facts proves the claimed smoothness in $\xi$ and $x$ variables.
%The last claim follows directly from Proposition~\ref{parametrization_by_admissible}.
\end{proof}

 \subsection{The phase and amplitude functions}
 We show next that the functions $\Phi(\xi,F(\xi),x)$ and $A_{\tau,\lambda_1,\lambda_2}(\xi,F(\xi),x)$ defined on $\mathcal{U}\times M$ are admissible phase and amplitude functions in the sense of~\cite{WZ} if $\mathcal{U}$ is generated by admissible geodesics. We begin with the phase function. 
 
 We make the following simplifications of notation:
 \begin{align*}
 \Phi(\xi,x):&=\Phi(\xi,F(\xi),x), \\
 A_{\tau}(\xi,x):&=A_{\tau,\lambda_1,\lambda_2}(\xi,F(\xi),x), \\
 \widehat{\Theta}_i(\xi,x):&=\widehat{\Theta}(\omega_i(\hat{\xi}),F(\omega_i(\hat{\xi})),x), \quad i=1,2.
 \end{align*}
 These notations are justified since the parameterizations of the orthonormal frames or the crossing geodesics (Lemma~\ref{lemma_crossing_geodesics_parametrization}) play no explicit role in what follows, and since $\lambda_i\in \R$, $i=1,2$, are fixed. Combining the first and the last, we write
 $$
 \Phi(\xi,x)=\abs{\xi}_{g(z)}(\widehat{\Theta}_1(\xi,x)+\widehat{\Theta}_2(\xi,x)), \quad \xi\in \mathcal{U}, \ z=\pi(\xi), \ x\in M.
 $$
 
 \begin{prop}\label{phase_conds}
 Let $(z_0,\xi_0)\in T^*M$ be a point and direction of interest that has a neighborhood generated by admissible geodesics. Then there is a neighborhood $\mathcal{U}\times U$ of $(\xi_0,z_0)$ such that the phase function $\Phi(\xi,x)$ of $u_\tau(\xi,x)$ is an admissible phase function in the sense of~\cite{WZ} on $\mathcal{U}\times U$:
\begin{enumerate}
 \item $\Phi$ is a polyhomogenous symbol of order one in $\xi$, 
 \smallskip
 \item $d_x\Phi|_\Delta = t_0\xi\cdot dx$, 
 \smallskip
 \item $\nabla_x^2\mbox{Im}\Phi|_\Delta \sim \langle \xi \rangle$,
 \smallskip
 \item $\Phi|_\Delta =0$
 \smallskip
 \item $\mbox{Im} \Phi \geq 0$
\end{enumerate}
Here $\Delta=\{(\xi,x)\in \mathcal{U}\times U; \pi(\xi)=x\}$ and $0<t_0<2$ is some constant and $\nabla_x^2$ is the Riemannian Hessian. 
% By shrinking $\mathcal{U}$ if necessary, we can take $U=\pi(\mathcal{U})$.
\end{prop}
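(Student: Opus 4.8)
The plan is to verify the five conditions directly from the explicit form of the phase function
\[
\Phi(\xi,x) = \abs{\xi}_{g(z)}\bigl(\widehat{\Theta}_1(\xi,x) + \widehat{\Theta}_2(\xi,x)\bigr),
\]
recalling that $\widehat{\Theta}_i(\xi,x) = \widehat{\Theta}(\omega_i(\hat{\xi}),F(\omega_i(\hat{\xi})),x)$ is, in Fermi coordinates $(t,y)$ along $\gamma_{\omega_i(\hat\xi)}$, of the form $t + \tfrac12 H_i(t)y\cdot y + \widetilde{\Theta}_i(t,y)$ with $\widetilde{\Theta}_i = O(\abs{y}^3)$ and $\mathrm{Im}\,H_i(t)$ positive definite for all $t$. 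The key structural points are that each $\widehat{\Theta}_i$ vanishes on $\gamma_{\omega_i(\hat\xi)}$, hence in particular at $x=z=\pi(\xi)$ (the common starting point of the two geodesics, using that the neighborhood is generated by admissible geodesic pairs through $z$), that $d_x\widehat{\Theta}_i|_{x=z} = \omega_i(\hat\xi)$ since $\widehat{\Theta}_i$ solves the eikonal equation with $\gamma_{\omega_i(\hat\xi)}$ as a characteristic, and that $\mathrm{Im}\,\widehat{\Theta}_i \geq 0$ with a quadratic lower bound transverse to $\gamma_{\omega_i(\hat\xi)}$.

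First I would dispose of the homogeneity and the two vanishing conditions. Condition (1): $\widehat{\Theta}_i(\xi,x)$ depends on $\xi$ only through $\hat\xi = \xi/\abs{\xi}_g$, hence is homogeneous of degree $0$ and smooth in $\xi$ away from $0$ by Lemma~\ref{basics_of_param_prod} and the smoothness of the parametrizations $\omega_i$ and $F$; multiplying by $\abs{\xi}_{g(z)}$ gives a function homogeneous of degree $1$, and by shrinking $\mathcal{U}$ and cutting off low frequencies (as in the definition of $u_\tau$) it is a polyhomogeneous symbol of order one. Condition (4): on the diagonal $\Delta$ we have $x = z = \pi(\xi)$, which lies on both geodesics $\gamma_{\omega_i(\hat\xi)}$ (they are generated by a pair issuing from $z$), and each $\widehat{\Theta}_i$ vanishes along its geodesic, so $\Phi|_\Delta = 0$. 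Condition (5): $\mathrm{Im}\,\widehat{\Theta}_i = \tfrac12 \mathrm{Im}\,H_i(t)\,y\cdot y + O(\abs{y}^3) \geq 0$ on the $\delta'$-neighborhood after shrinking $\delta'$, so $\mathrm{Im}\,\Phi = \abs{\xi}_g(\mathrm{Im}\,\widehat{\Theta}_1 + \mathrm{Im}\,\widehat{\Theta}_2) \geq 0$; if $x$ lies outside one of the Fermi tubes the corresponding factor is cut off in the amplitude and the phase is irrelevant there, but to be safe one works on the intersection of a small enough neighborhood $U$ of $z_0$ with both tubes, which is a neighborhood of $z_0$ by construction.

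Next, condition (2): at $x = z$ the eikonal property gives $d_x\widehat{\Theta}_i(\xi,x)|_{x=z} = \omega_i(\hat\xi)$ (exactly the computation reproduced in Section~\ref{sec_simple}: $\nabla_x\psi(\,\cdot\,;z,\omega_i) = \omega_i$ at the base point), and also $\widehat{\Theta}_i|_{x=z} = 0$, so $d_x\bigl(\abs{\xi}_g\widehat{\Theta}_i\bigr)|_{x=z} = \abs{\xi}_g\,\omega_i(\hat\xi)$ (the term $\widehat{\Theta}_i\,d_x\abs{\xi}_g$ vanishes since $\widehat{\Theta}_i$ does, though here $\abs{\xi}_g$ is $x$-independent anyway once the base point $z$ is fixed). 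Summing and using $\omega_1(\hat\xi) + \omega_2(\hat\xi) = t_0\hat\xi$ from \eqref{crossing_geo} gives $d_x\Phi|_\Delta = \abs{\xi}_g\, t_0\,\hat\xi = t_0\,\xi$, i.e. $t_0\xi\cdot dx$ in coordinates. Finally, condition (3): the Riemannian Hessian $\nabla_x^2\,\mathrm{Im}\,\Phi|_\Delta = \abs{\xi}_g\bigl(\nabla_x^2\,\mathrm{Im}\,\widehat{\Theta}_1 + \nabla_x^2\,\mathrm{Im}\,\widehat{\Theta}_2\bigr)|_{x=z}$, and each summand at $x=z$ equals $\mathrm{Im}\,H_i(0) = \mathrm{Im}\,H_0 = I$ (the chosen initial value $H_0 = iI$), modulo the $O(\abs{y}^3)$ remainder which does not contribute to the Hessian at $y=0$ and modulo the difference between the Euclidean and Riemannian Hessians, which at $y=0$ in Fermi coordinates vanishes because the Christoffel symbols vanish there ($\partial_i g^{jk}|_{y=0} = 0$); hence $\nabla_x^2\,\mathrm{Im}\,\Phi|_\Delta = 2\abs{\xi}_g\,\mathrm{Id} \sim \langle\xi\rangle$.

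The main obstacle I expect is bookkeeping rather than a conceptual difficulty: one must check that, as $\xi$ ranges over $\mathcal{U}$, the two Fermi coordinate systems (one for each geodesic $\gamma_{\omega_i(\hat\xi)}$, depending smoothly on $\xi$ through $\omega_i$ and the frame parametrization $F$) are simultaneously valid on a \emph{uniform} neighborhood $U$ of $z_0$, so that $\Phi(\xi,x)$ and the conditions above hold on a genuine product $\mathcal{U}\times U$; this requires shrinking $\mathcal{U}$, $\delta'$, and $U$ together and using compactness/continuity of the construction in $\xi$ near $\xi_0$, which is exactly what the admissibility of the generating geodesic pairs and Proposition~\ref{parametrization_by_admissible} provide (distinct, non-self-intersecting geodesics meeting only at $z$ have tubular neighborhoods whose intersection is a neighborhood of $z$, stably under small perturbations of the initial covectors). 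A secondary point of care is condition (5) away from the diagonal: one should only claim $\mathrm{Im}\,\Phi \geq 0$ on the region where both amplitude factors are supported, or equivalently note that the positivity of $\mathrm{Im}\,H_i(t)$ along the whole geodesic (by \cite[Lemma 2.56]{KKL}) gives $\mathrm{Im}\,\widehat{\Theta}_i \geq c\abs{y}^2$ on the tube after shrinking $\delta'$, which is all that is needed.
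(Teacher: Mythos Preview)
Your overall strategy matches the paper's proof closely: verify the five conditions directly from the Fermi-coordinate expansion of each $\widehat{\Theta}_i$. Conditions (1), (2), and (4) are handled essentially as in the paper (with one slip in (4): $\widehat{\Theta}_i$ does \emph{not} vanish along the whole geodesic, since $\widehat{\Theta}_i(t,0)=t$; it vanishes only at the initial point $t=0$, which is all you need since $x=z=\pi(\xi)$ corresponds to $(t,y)=(0,0)$). For (5) you argue directly from the tubewise positivity of $\mathrm{Im}\,H_i(t)$, whereas the paper derives (5) from (2)--(4) by Taylor expanding $\mathrm{Im}\,\Phi$ at $x=z$ and using the positive-definite Hessian from (3); both routes are fine.

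The genuine gap is in your argument for (3). You write that ``each summand at $x=z$ equals $\mathrm{Im}\,H_i(0)=\mathrm{Im}\,H_0=I$'' and conclude $\nabla_x^2\,\mathrm{Im}\,\Phi|_\Delta = 2\abs{\xi}_g\,\mathrm{Id}$. This is not correct: $H_i$ is an $(m-1)\times(m-1)$ matrix acting only on the transverse $y$-variables, so in the full $m\times m$ Hessian the $t$-row and $t$-column vanish. Concretely, in Fermi coordinates along $\gamma_{\omega_i(\hat\xi)}$,
\[
\nabla_x^2\,\mathrm{Im}\,\widehat{\Theta}_i\big|_{x=z}
= \sum_{j,k=2}^{m}\mathrm{Im}\,H_i(0)_{jk}\,dy^j\otimes dy^k,
\]
which is only positive \emph{semi}definite, with null direction $\omega_i(\hat\xi)$. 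The paper's argument (which you are missing) is that the two null directions $\omega_1(\hat\xi)$ and $\omega_2(\hat\xi)$ are \emph{not parallel} (since $\omega_1+\omega_2=t_0\hat\xi$ with $0<t_0<2$ forces $\omega_1\neq\pm\omega_2$), and therefore the sum of the two semidefinite forms is positive definite on the whole cotangent space. Without this observation your proof of (3) fails; with it, the rest of your argument goes through.
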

We remark that there is an additional assumption in~\cite[Definition 2.1]{WZ} that $\Phi$ is an elliptic symbol. This however can be omitted. We thank Jared Wunsch for clarifying this to us. The reason for the conditions above is that one can write the phase function in any local coordinates near the diagonal $\Delta$ as
\begin{equation}\label{local_xpr}
\Phi=t_0\xi\cdot (x-z)+\langle Q(x,\xi)(x-z),(x-z)\rangle,
\end{equation}
Here $Q$ is a symmetric matrix-valued symbol (depending on the used local coordinates) with $\text{Im}(Q)|_\Delta\sim \langle \xi \rangle$. When the conditions $(1)$-$(5)$ above hold, the formula~\eqref{local_xpr} follows by Taylor expanding in $x$ around $z=\pi(\xi)$. Compare with the part $(5)$ of the proof which follows.
\begin{proof}[Proof of Proposition~\ref{phase_conds}.]
Let $(z_0,\xi_0)\in T^*M$ and let $\Phi(\xi,x)$ be first defined on $\mathcal{U}\times U$ as explained in the beginning of this section. 
We will need to redefine both $\mathcal{U}$ and $U$ while we advance in the proof, and then restrict $\Phi(\xi,x)$ onto the redefined sets.

For $\xi\in \mathcal{U}$, the phase function reads
 $$
 \Phi(\xi,x)=\abs{\xi}_g\left(\hat{\Theta}(\omega_1(\hat{\xi}),F(\omega_1(\hat{\xi})),x)+\hat{\Theta}(\omega_2(\hat{\xi}),F(\omega_2(\hat{\xi})),x)\right).
 $$
 Recall that for a unit covector $\sigma\in S^*M$, the function 
 $$
 \hat{\Theta}(\sigma,F(\sigma),x)
 $$
 is the phase function of a Gaussian beam constructed for a nontangential geodesic $\gamma_\sigma$ in $(\sigma,F(\sigma))$-Fermi coordinates.
 
\medskip
\noindent {\bf (1)} The phase function $\Phi$ is $1$-(poly)homogeneous since 
$$
\xi\mapsto \hat{\Theta}(\omega_i(\hat{\xi}),F(\omega_i(\hat{\xi})),x), \quad i=1,2,
$$ 
is $0$-homogeneous by definition.

\medskip

\noindent {\bf (2)} Let us verify the three conditions of the behavior of $\Phi$ on the diagonal. Let $\hat{\xi}\in S_z^*U$ and $F(\hat{\xi})\in FO_{\hat{\xi}}(M)$. This data defines Fermi coordinates $(t,y)=(y^1,\ldots, y^{m})$ uniquely, where we denote $y^1=t$ and $y=(y^2,\ldots,y^{m})$. In these Fermi coordinates we have 
$$
\widehat{\Theta}(t,y)=t+\frac{1}{2}H(t)y\cdot y + \widetilde{\Theta}(t,y)
$$
for the phase function of the corresponding Gaussian beam, where 
$$
\widetilde{\Theta}(t,y)=O(\abs{y}^3).
$$
(See the beginning of this section.) Here we have omitted $\xi$-dependent quantities from the presentation to simplify the notation.

We first show that
\begin{equation}\label{osc_dir}
d_x\Theta(\xi,F,x)|_\Delta=\xi\cdot dx,
\end{equation}
for any $\xi\in T^*M\setminus\{0\}$, $F\in FO_{\hat{\xi}}(M)$. From this it follows that 
\begin{align*}
d_x\Phi(\xi,x)\big|_\Delta&=d_x\left(\Theta(\abs{\xi}\omega_1(\hat{\xi}),F(\omega_1(\hat{\xi})),x)+\Theta(\abs{\xi}\omega_2(\hat{\xi}),F(\omega_2(\hat{\xi})),x)\right)\big|_\Delta \\
&=(\abs{\xi}_{g}\omega_1(\hat{\xi})+\abs{\xi}_{g}\omega_2(\hat{\xi}))\cdot dx=t_0\xi\cdot dx
\end{align*}
since
$$
\omega_1(\hat{\xi})+\omega_2(\hat{\xi})=t_0\hat{\xi}
$$
by the definition of $\omega_i$, $i=1,2$, in Lemma~\ref{lemma_crossing_geodesics_parametrization}.

To show~\eqref{osc_dir}, we note that it is a pointwise equation in the $\xi$-variable. Thus we may fix $\xi\in T_z^*M\setminus\{0\}$ and calculate in $(\hat{\xi},F(\hat{\xi}))$-Fermi coordinates $y=(y^1,\ldots,y^{m})$, with $y^1=t$ and $y=(y^2,\ldots,y^{m})$. We have $y(z)=0$, and
\begin{align*}
d_y\Theta(t,y)|_\Delta&=\abs{\xi}_{g(0)}\frac{\p}{\p y^i}\Big(t+\frac{1}{2}H(t)y\cdot y + \widetilde{\Theta}(t,y)\Big)\Big|_{y=0}dy^i \\
&=\abs{\xi}_{g(0)}dt=\xi_idy^i.
\end{align*}
Here we have used the fact that in $(\hat{\xi},F(\hat{\xi}))$-Fermi coordinates $\hat{\xi}=dt$. The claim (3) follows.

\noindent {\bf (3)} We analyze the Hessian of $\Phi$ in a similar manner. Let $\xi\in T_z^*U\setminus\{0\}$. We show first that 
\begin{align}\label{posatort}
d_x^2\text{Im}(\Theta(\xi,F,x))|_\Delta=\abs{\xi}_g M^\bot(\hat{\xi}, x)\in T^2_0M,
\end{align}
where $M^\bot(\xi,\cdot)$ is a local $2$-tensor field on $M$, positive definite in the orthogonal complement 
$$
H_\xi=\{\omega\in T^*_zM: g(\omega,\xi)=0\}
$$ 
of $\xi$, and $M^\bot(\xi,\cdot)(\xi,\eta)=0$ for all $\eta\in T_z^*M$. From this it will then follow that
\begin{align*}
d_x^2\text{Im}\,\Phi(\xi,x)|_\Delta& =\abs{\xi}_{g} M^\bot(\omega_1(\hat{\xi}), x) +\abs{\xi}_{g}  M^\bot(\omega_2(\hat{\xi}), x) \\ 
&=\abs{\xi}_{g} M(\hat{\xi}, x),
\end{align*}
where $M(\xi,x)$ is a positive definite matrix field in the whole cotangent space $T^*_zU$ since $\omega_1(\hat{\xi})$ and $\omega_2(\hat{\xi})$ are not parallel.

Again, since the claim is pointwise in the $\xi$-variable, we calculate in $(\hat{\xi},F(\hat{\xi}))$-Fermi coordinates. We have
\begin{align*}
d_x^2\Theta(\xi,F(\hat{\xi}),x)\big|_\Delta
&=\abs{\xi}_{g(0)}\left(\frac{\p^2}{\p y^i\p y^j}\left(t+\frac{1}{2} H(t)y\cdot y + \widetilde{\Theta}(t,y)\right)\right)\Big|_{y=0} dy^i\otimes dy^j \\
&=\abs{\xi}_{g(0)}\sum_{i,j=2}^mH(t)_{ij}dy^i\otimes dy^j.
\end{align*}
Here we have used the fact that $d_x^2\text{Im}(\widetilde{\Theta}(t,y))=O(|y|)$, and that in Fermi coordinates the Christoffel symbols vanish, and metric is the identity matrix, on the corresponding geodesic. Thus, we have~\eqref{posatort}, and consequently $(4)$.

\noindent {\bf (4)} If $\xi\in T_z^*M\setminus\{0\}$ is fixed, then in the corresponding Fermi coordinates $y$ we have $y(z)=0$. The claim follows from the formula of the phase function of a Gaussian beam in Fermi coordinates.

\noindent {\bf (5)} This follows from $(2)$-$(4)$: Let $\xi\in \mathcal{U}$. We Taylor expand in local coordinates at $z=\pi(\xi)$ using $\Phi|_{x=z}=0$ and $\mbox{Im}(d_x\Phi|_{x=z})=0$. (Especially the later implies $(\nabla^2 \text{Im}\Phi)_{ij}=\p_i\p_j\text{Im}\Phi$ on $\{x=z\}$.) We have
$$
\mbox{Im} \Phi=\langle\nabla^2\mbox{Im} \Phi(x,\xi)|_{x=z} (x-z),(x-z)\rangle + O_\xi(|x-z|^3),
$$
where $\langle \xi\rangle/C_\xi\leq \nabla^2\mbox{Im}\Phi(x,\xi)|_{x=z} \leq \langle \xi\rangle C_\xi$. Since $\Phi$ is smooth in its variables we have that there is uniform $C>1$ such that $1/C \leq \nabla^2\mbox{Im}\Phi(x,\xi)|_{x=z}$ if we redefine $\mathcal{U}$ as $\{t\xi\in T^*M: \xi\in U_S, t\in (\frac 12 \abs{\xi_0},\infty)\}$. 

Thus we may write by using Taylor's theorem with a remainder as
$$
\mbox{Im} \Phi=\langle\nabla^2\mbox{Im} Q(x,\xi) (x-z),(x-z)\rangle,
$$
where $Q(x,\xi)$ is uniformly positive definite in its variables on a neighborhood of the diagonal $\Delta$. Shrinking $\mathcal{U}$ and $U$ further so that $\pi(\mathcal{U}) \times U$ belongs to this neighborhood, and contains $(z_0,z_0)$, gives $\mbox{Im} \Phi\geq 0$ on $\mathcal{U}\times U$.
\end{proof}

We continue with the amplitude function $A_\tau(\xi,x)$. We first show that this is a polyhomogenous symbol in $S_\phg^{\frac{m-1}{2},0}$ in the sense of Wunsch and Zworski~\cite{WZ}. Later on, we will multiply $A_\tau$ with suitable powers of $\tau$ and $\abs{\xi}$, so that after these multiplications, the result is in $S_\phg^{\frac{3m}{4},\frac m4}$ as required by~\cite{WZ}. (The $\tau$ dependence factor $\frac{m-1}{2}$ comes from multiplying the amplitudes of two Gaussian beams with powers of $\tau$ of $\frac{m-1}{4}$.)
\begin{prop}\label{amplitude_prop}
The amplitude function $A_\tau(\xi,x)$ on $\mathcal{U}\times U\times [\tau_0,\infty)$
\begin{align*}
A_\tau(\xi,x)&\sim e^{-\lambda_1\hat{\Theta}(\omega_1(\hat{\xi}),F(\omega_1(\hat{\xi})),x)-\lambda_2\hat{\Theta}(\omega_2(\hat{\xi}),F(\omega_2(\hat{\xi})),x)}\tau^{\frac{m-1}{2}} \\
&\times \sum_{j,l=0}^\infty s_{jl}\hat{a}_{-j}(\omega_1(\hat{\xi}),F(\omega_1(\hat{\xi})),x)\hat{a}_{-l}(\omega_2(\hat{\xi}),F(\omega_2(\hat{\xi})),x),
\end{align*}
where
$$
s_{jl}=\left(\frac{1}{\tau\abs{\xi}+i\lambda_1}\right)^j\left(\frac{1}{\tau\abs{\xi}+i\lambda_2}\right)^l
$$
is a polyhomogenous symbol in the class $S_\phg^{\frac{m-1}{2},0}$. Here $\tau_0$ is sufficiently large.
\end{prop}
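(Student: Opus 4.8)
The plan is to exhibit $A_\tau(\xi,x)$ as a product of three polyhomogeneous symbols in the sense of~\cite{WZ} --- the $\tau$-independent prefactor $e^{-\lambda_1\widehat{\Theta}_1-\lambda_2\widehat{\Theta}_2}$, the explicit power $\tau^{\frac{m-1}{2}}$, and the amplitude product $a_{s_1}(\abs{\xi}\omega_1(\hat{\xi}),F(\omega_1(\hat{\xi})),x)\,a_{s_2}(\abs{\xi}\omega_2(\hat{\xi}),F(\omega_2(\hat{\xi})),x)$ --- and then to invoke that products and asymptotic sums of polyhomogeneous symbols are again polyhomogeneous, with the bi-orders adding. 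Throughout we use that $\mathcal{U}$ is generated by nontangential geodesics, so that all objects in sight are defined and, by Lemma~\ref{basics_of_param_prod}, depend smoothly on $(\xi,x)$.

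First I would treat the prefactor. The functions $\widehat{\Theta}_i(\xi,x)=\widehat{\Theta}(\omega_i(\hat{\xi}),F(\omega_i(\hat{\xi})),x)$ are $\tau$-independent, $0$-homogeneous in $\xi$, and smooth on $\mathcal{U}\times U$ by Lemma~\ref{basics_of_param_prod} and smoothness of the parametrizations $\omega_i$ and $F$. Since $\lambda_i\in\mathbb{R}$ one has $\abs{e^{-\lambda_i\widehat{\Theta}_i}}=e^{-\lambda_i\mathrm{Re}\,\widehat{\Theta}_i}$, and in Fermi coordinates $\mathrm{Re}\,\widehat{\Theta}_i=t+\frac{1}{2}\mathrm{Re}\,H(t)y\cdot y+\mathrm{Re}\,\widetilde{\Theta}$ ranges over a bounded set on the (compact) support of the amplitude; hence the prefactor and all its derivatives are bounded uniformly in $\xi$, and by $0$-homogeneity every $\xi$-derivative gains a factor $\langle\xi\rangle^{-1}$. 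Thus the prefactor lies in $S^{0,0}_\phg$, while $\tau^{\frac{m-1}{2}}\in S^{\frac{m-1}{2},0}_\phg$ trivially.

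Next I would analyze the amplitude product. By the construction in Section~\ref{sec_quasimodes} together with step (2) of the parametrization, $\tau^{-\frac{m-1}{4}}a_{s_k}(\abs{\xi}\omega_k(\hat{\xi}),F(\omega_k(\hat{\xi})),x)\sim\sum_{j\geq0}s_k^{-j}\hat{a}_{-j}(\omega_k(\hat{\xi}),F(\omega_k(\hat{\xi})),x)$, with $s_k=\tau\abs{\xi}_g+i\lambda_k$. After shrinking $\mathcal{U}$ so that $\abs{\xi}_g\geq c>0$ on it (exactly as in the proof of Proposition~\ref{phase_conds}) and taking $\tau_0$ large, the quantity $\tau\abs{\xi}_g$ stays large, and writing $(\tau\abs{\xi}_g+i\lambda_k)^{-1}=(\tau\abs{\xi}_g)^{-1}\sum_{p\geq0}(-i\lambda_k/(\tau\abs{\xi}_g))^p$ exhibits $s_k^{-1}$ as a convergent expansion into terms homogeneous of degree $-1-p$ in $\tau$ and in $\abs{\xi}_g$ (the latter since $\abs{\xi}_g$ is smooth and $1$-homogeneous on $T^*M\setminus\{0\}$); hence $s_k^{-1}\in S^{-1,-1}_\phg$. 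The functions $\hat{a}_{-j}(\omega_k(\hat{\xi}),F(\omega_k(\hat{\xi})),x)$ are $\tau$-independent, $0$-homogeneous in $\xi$, and smooth with all derivatives bounded --- being smooth functions on a compact set composed with the smooth parametrizations $\omega_k$ and $F$ --- so they lie in $S^{0,0}_\phg$. Consequently the $(j,l)$-term $s_{jl}\,\hat{a}_{-j}(\omega_1(\hat{\xi}),\cdots)\hat{a}_{-l}(\omega_2(\hat{\xi}),\cdots)$ lies in $S^{-j-l,-j-l}_\phg$, and since both orders strictly decrease with $j+l$, the double series is a genuine asymptotic sum; by Borel summation for polyhomogeneous symbols (the same mechanism by which $a_{s_k}$ was built), the product $a_{s_1}(\cdots)a_{s_2}(\cdots)$ is a symbol in $S^{\frac{m-1}{2},0}_\phg$ with precisely the displayed expansion and leading term $\tau^{\frac{m-1}{2}}\hat{a}_0(\omega_1(\hat{\xi}),\cdots)\hat{a}_0(\omega_2(\hat{\xi}),\cdots)$. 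Multiplying by the prefactor and using additivity of the orders gives $A_\tau\in S^{\frac{m-1}{2},0}_\phg$, as claimed.

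The step I expect to require the most care is verifying that the expansion $\tau^{-\frac{m-1}{4}}\hat{a}_s\sim\sum_j s^{-j}\hat{a}_{-j}$ from Section~\ref{sec_quasimodes}, which was phrased there only with spatial derivatives, genuinely upgrades to a polyhomogeneous expansion in the two-parameter $(\tau,\xi)$ sense of~\cite{WZ}: one must check that the remainders survive differentiation in $\xi$ (through the dependence of $\hat{a}_{-j}$, of the Fermi coordinates, and of the Riccati solution on $\xi$) and in $\tau$. This is handled by revisiting the Borel summation of Section~\ref{sec_quasimodes}: the cutoff parameters $\lambda_j$ can be chosen so large that the analogue of~\eqref{lambdaj_choice} holds with all $(\xi,\tau)$-derivatives of $\hat{a}_{-j}(t,y)\chi(y/\delta')$ and of $\eta(\tau/\lambda_j)$ included --- the extra derivatives act only on $\xi$-dependent but $\tau$-independent data and on $\eta(\tau/\lambda_j)$, neither of which disturbs the geometric-series bookkeeping --- so the argument there yields the required uniform symbol estimates in $(\xi,\tau)$.
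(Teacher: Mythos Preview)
Your proposal is correct and follows essentially the same route as the paper: both expand $s_{jl}=(\tau\abs{\xi}+i\lambda_1)^{-j}(\tau\abs{\xi}+i\lambda_2)^{-l}$ into descending powers of $\tau\abs{\xi}$, observe that the $\hat{a}_{-j}(\omega_k(\hat{\xi}),\cdots)$ are $0$-homogeneous and bounded (smooth on a compact set), bound the exponential prefactor by smoothness, and then read off the polyhomogeneous expansion in the sense of \cite[Def.~2.3]{WZ}. Your version is in fact slightly more thorough than the paper's, which only verifies the growth condition explicitly; your final paragraph, on upgrading the Borel-summed expansion so that remainders survive $(\xi,\tau)$-differentiation via a stronger choice of the $\lambda_j$ in \eqref{lambdaj_choice}, fills in a point the paper leaves implicit. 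One small expository wrinkle: since each $a_{s_k}$ already carries the factor $\tau^{\frac{m-1}{4}}$, your ``three factors'' are really two --- the prefactor and $a_{s_1}a_{s_2}$ --- but this does not affect the argument.
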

\begin{proof}
We have the expansion
$$
s_{jl}=(\tau\abs{\xi})^{-(j+l)}-i\lambda_1(\tau\abs{\xi})^{-(j+l+1)}-i\lambda_2(\tau\abs{\xi})^{-(j+l+1)}+\cdots
$$
Since each $a_{-j}(\omega_i(\hat{\xi}),F(\omega_i(\hat{\xi})),x)$, $j=1,2,\ldots$, $i=1,2$, is continuous in its variables, and $0$-homogeneous in $\xi$, we have
\begin{align*}
s_{jl}&\hat{a}_{-j}(\omega_1(\hat{\xi}),F(\omega_1(\hat{\xi})),x)\hat{a}_{-l}(\omega_2(\hat{\xi}),F(\omega_2(\hat{\xi})),x) \\
&=(\tau\abs{\xi})^{-(j+l)}O_{j,l}(1)%a_j(\xi,\omega_1(\xi),x)a_l(\xi,\omega_2(\xi),x)
+(\tau\abs{\xi})^{-(j+l+1)}O_{j,l}(1)+\cdots
\end{align*}
and we can write the asymptotic sum
$$
\sum_{j,l=0}^\infty s_{jl}\hat{a}_{-j}(\omega_1(\hat{\xi}),F(\omega_1(\hat{\xi})),x)\hat{a}_{-l}(\omega_2(\hat{\xi}),F(\omega_2(\hat{\xi})),x)
$$
by arranging the powers of $\tau\abs{\xi}$ as
$$
\sum_{j=0}^\infty\tau^{-j}\abs{\xi}^{-j}B_{-j},
$$
where each
$$
B_{-j}=B_{-j}(\xi,x)=O_{j}(1)
$$
in its both variables.
%(The notation $O$ here depends on $\lambda_i$, $i=1,2$, but in this paper we are not interest in that.)

The factor
$$
e^{-\lambda_1\widehat{\Theta}(\omega_1(\hat{\xi}),F(\omega_1(\widehat{\xi})),x)-\lambda_2\hat{\Theta}(\omega_1(\hat{\xi}),F(\omega_1(\hat{\xi})),x)}
$$
is smooth in its variables and thus satisfies
 $$
 \abs{e^{-\lambda_1\hat{\Theta}(\omega_1(\hat{\xi}),F(\omega_1(\hat{\xi})),x)-\lambda_2\hat{\Theta}(\omega_1(\hat{\xi}),F(\omega_1(\hat{\xi})),x)}}\leq C
 $$
 on $\mathcal{U}\times U$.

Set $\tilde{n}=\frac{m-1}{2}$. It follows automatically that $A_\tau(\xi,x)$ satisfies
$$
\abs{A_\tau(\xi,x)-\tau^{\tilde{n}}(B_0+\cdots+\tau^{-j}B_{-j})}\leq C_j \tau^{-\tilde{n}-j-1}\abs{\xi}^{0-j-1}, \quad \text{for } \abs{\xi}>1.
$$
Thus 
$
A_\tau(\xi,x)
$
satisfies the growth condition of~\cite[Def. 2.3.]{WZ}. 
\end{proof}

\section{Recovery of singularities}\label{sec_fbi}
We apply Theorem 4.8 of~\cite{WZ} to prove our main theorem, Theorem~\ref{thm_main1}. That is, we show  that if $\xi_0\in T^*_{z_0}M_0$ has a neighborhood generated by admissible geodesic pairs, and if the other assumptions of Theorem~\ref{thm_main1} hold, then
$$
(z_0,\xi_0)\notin WF(\hat{f}(\lambda, \,\cdot\,)).
$$
Especially, if we $\hat{\xi_0}$ satisfies the strict Stefanov-Uhlmann regularity condition, we show the above to be true.

So far we have shown that multiplying Gaussian beam quasimodes, near a given point of interest $z_0$, and using~\ref{integral_O_infty}, produces an integral transformation of $\hat{f}(\lambda,x)$ in the $x$-variable satisfying
\begin{equation}\label{rapid_decay}
\int_{M_0} \hat{f}(\lambda,x) u_\tau(\xi,x) \,dV_{g_0} = O((\tau\abs{\xi})^{-\infty}),
\end{equation}
where $u_\tau$ is given in~\eqref{def_of_u_tau}, $\xi\in \mathcal{U}$ and $(M_0,g_0)$ is the transversal manifold.

We may define a function  
$$
k_\tau(\xi,x)=\abs{\xi}^{\frac{m}{4}}\tau^{\frac{m+2}{4}}u_\tau(\xi,x)
$$
on $\mathcal{U}\times U$, and by Proposition~\eqref{amplitude_prop} $k_\tau=e^{i \tau \Phi}B_\tau$, where $B_\tau$ is a polyhomogenous symbol of class $S_{\text{phg}}^{\frac{3m}{4},\frac{m}{4}}$ in the sense of~\cite{WZ}. By~\eqref{rapid_decay}, the integral of $k_\tau$ against $\hat{f}(\lambda,\cdot)$ is of order $\abs{\xi}^{\frac{m}{4}}O((\tau\abs{\xi})^{-\infty})$.

We are ready to prove our main result. The proof is a direct application of Theorem 4.8. of~\cite{WZ}.
\begin{proof}[Proof of Theorem~\ref{thm_main1}]
Let $(z_0,\xi_0)\in T^*M_0$ so that $\xi_0$ has a neighborhood in $T^*M_0$ generated by admissible geodesic pairs.  %By lemma~\ref{perturbation_of_nontangential} the same hold in a neighborhood of $\xi_0$ in $T^*M$. 
Thus, the argument leading to~\eqref{integral_O_infty} combined with the construction in Section~\ref{sec_quasimodes} implies that
$$
\int_{M_0} \hat{f}(\lambda,x) k_\tau(\xi,x) \,dV_{g_0}(x) = O(\tau^{-\infty})
$$
for $\xi$ belonging to some bounded neighborhood of $\xi_0$ in $T^*M_0$. By Theorem 4.8. of~\cite{WZ} we have
$$
(z_0,\xi_0)\notin WF(\hat{f}(\lambda,\cdot)).
$$
\end{proof}

\section{APPENDIX}
\begin{proof}[Proof of Lemma~\ref{lemma_crossing_geodesics_parametrization}]
Let $y$ be normal coordinates near $z_0$, and write 
\[
\gamma_0 = a_j dy^j|_{z_0}
\]
where $(a_1, \ldots, a_n)$ is a unit vector in $\mR^n$. We then define the unit covectors 
\[
\gamma(z) := \frac{a_j dy^j|_z}{\langle a_j dy^j|_z, a_j dy^j|_z \rangle^{1/2}}
\]
and 
\begin{align*}
\omega_1(z,\omega) &:= \frac{\gamma(z) - \langle \gamma(z), \omega \rangle \omega + c(z,\omega) \omega}{[1 - \langle \gamma(z), \omega \rangle^2 +c(z,\omega)^2]^{1/2}}, \\
\omega_2(z,\omega) &:= 2 \langle \omega_1(z,\omega), \omega \rangle \omega - \omega_1(z,\omega)
\end{align*}
for $(z,\omega)$ near $(z_0,\omega_0)$, where $c(z,\omega)$ is chosen so that 
\[
\langle \omega_1(z,\omega), \omega \rangle = \langle \gamma_0, \omega_0 \rangle.
\]
A computation shows that the right choice for $c$ is 
\[
c(z,\omega) := \langle \gamma_0, \omega_0 \rangle \left[ \frac{1 - \langle \gamma(z), \omega \rangle^2}{1 - \langle \gamma_0, \omega_0 \rangle^2} \right]^{1/2}.
\]
Then $\omega_1$ and $\omega_2$ depend smoothly on $(z,\omega)$ near $(z_0,\omega_0)$, and they satisfy 
\begin{gather*}
\omega_1(z_0,\omega_0) = \gamma_0, \qquad \omega_2(z_0,\omega_0) = \tilde{\gamma}_0, \\
\omega_1(z,\omega) + \omega_2(z,\omega) = t_0 \omega
\end{gather*}
where $t_0 = 2 \langle \gamma_0, \omega_0 \rangle$ is a constant.
\end{proof}

Another way to do the above would be the following:
\begin{proof}[Alternative proof of Lemma~\ref{lemma_crossing_geodesics_parametrization}]
 Let $\xi_0\in S^*_{z_0}M$, and assume that $\zeta_i\in S^*M$, $i=1,2$, are such that 
 \begin{equation}\label{sum_generates}
 \zeta_1+\zeta_2=t_0\xi_0
 \end{equation} Let $U$ be a neighborhood of $z_0$ where the exponential map is defined. We define the parametrization $I=(\omega_1,\omega_2)$ as follows. We set for $\xi\in S^*U$
 \begin{equation*}\label{define_param}
 \omega_i(\xi)=P\circ \left (\norm{(P^{-1}\xi)^\parallel}O_{\xi}\zeta_i+\frac{1}{2}(P^{-1}\xi)^\bot\right).
 \end{equation*}
 Here $P$ stands for the parallel translation $S^*_{z_0}M\to S^*_{\pi(\xi)}M$ along unit speed geodesic with $\exp_{z_0}^{-1}(\pi(\xi))$ as initial data for unit time $t=1$, and $P^{-1}$ is its inverse $S^*_{\pi(\xi)}M\to S^*_{z_0}M$.
 Here we have orthogonally decomposed 
 $$
 P^{-1}\xi=(P^{-1}\xi)^\parallel+(P^{-1}\xi)^\bot,
 $$
 to the part $(P^{-1}\xi)^\parallel$ in the plane $V$ spanned by $\zeta_1$ and $\zeta_2$ and to the part orthogonal to $V$. Since $\zeta_1+\zeta_2=t_0\xi_0$, we have that $\xi_0\in V$. Above $O_{\xi}$ %{\eta \to (P^{-1}\xi)^{\parallel}}$ 
 is the unique rotation on the plane, an element of $SO(1)$, that takes $\xi_0$ to be parallel with $(P^{-1}\xi)^{\parallel}$. Thus $O_\xi$ satisfies
 \begin{equation}\label{def_of_O}
 \left[O_{\xi}\right]\xi_0=\frac{(P^{-1}\xi)^{\parallel}}{\norm{(P^{-1}\xi)^{\parallel}}}.
 \end{equation}

 We have 
 \begin{align*}
&\left(\norm{(P^{-1}\xi)^\parallel}O_{\xi}\zeta_1+\frac{t_0}{2}(P^{-1}\xi)^\bot\right)+\left(\norm{(P^{-1}\xi)^\parallel}O_{\xi}\zeta_2+\frac{t_0}{2}(P^{-1}\xi)^\bot\right) \\
&=t_0\left((P^{-1}\xi)^{\parallel}+(P^{-1}\xi)^\bot\right)=t_0(P^{-1}\xi),
 \end{align*}
 where we have used equations~\eqref{sum_generates} and~\eqref{def_of_O}. Consequently
 $$
 \omega_1(\xi)+\omega_2(\xi)=t_0\xi
 $$
 by linearity of the parallel translation. We also have $\omega_i(\xi_0)=\zeta_i$. The parametrization is well defined and smooth as long as $(P^{-1}\xi)^\parallel\neq 0$. Thus by continuity of the exponential map and of the parallel translation, the parametrization is well defined and smooth on some neighborhood of $\xi_0$.
\end{proof}

The following lemma gives a smooth local parametrization for orthonormal coframes $F\in FO_\xi(M)$ as a function of $\xi\in S^*M$ near a given $\xi_0\in S^*M$.
\begin{lemma}\label{frame_parametrization}
 Let $\xi_0\in S_{z_0}^*M$.
 Then there exists an open neighborhood $U_S$ of $\xi_0$ in $S^*M$ and a $C^\infty$ smooth mapping $F:U_S\to FO_\xi(M)$, 
 $$
 F(\xi)=(\omega^1(\xi),\ldots,\omega^n(\xi)),
 $$
 such that
\begin{equation}\label{frame_points_to}
\omega^1(\xi)=\xi
\end{equation}
and $\pi(F(\xi))=\pi(\xi)$. (This latter condition just means $\xi$ over a point $z\in M$ is mapped to a coframe $F(\xi)$ over the same point $z$.)
\end{lemma}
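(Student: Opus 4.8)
The plan is to build $F$ by applying Gram--Schmidt orthonormalization, with respect to the smoothly varying fibre inner product induced by $g$ on $T^*M$, to an ordered tuple of covectors whose first entry is $\xi$ itself.

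Concretely, I would first fix local coordinates on a neighborhood $W$ of $z_0$ in $M$; these trivialize $T^*M|_W \cong W \times \mR^n$, and in this trivialization the cometric of $g$ becomes a smooth map $z \mapsto G(z)$ with values in symmetric positive definite $n \times n$ matrices, while a covector $\xi \in S^*M$ over $z \in W$ corresponds to a pair $(z,w)$ with $G(z)w \cdot w = 1$. Writing $w_0$ for the coordinate vector of $\xi_0$, I would choose once and for all vectors $v_2, \dots, v_n \in \mR^n$ so that $(w_0, v_2, \dots, v_n)$ is a $G(z_0)$-orthonormal basis of $\mR^n$ (possible since $w_0$ is $G(z_0)$-unit).

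Then, for $\xi = (z,w) \in S^*M|_W$, I would apply Gram--Schmidt relative to the inner product $G(z)$ to the ordered tuple $(w, v_2, \dots, v_n)$ and let $F(\xi)$ be the resulting $n$-tuple, reinterpreted as covectors over $z$. At $\xi = \xi_0$ the input tuple is already $G(z_0)$-orthonormal, so the procedure returns it unchanged; in particular the input vectors are linearly independent there, and since the relevant determinant depends continuously on $\xi$, they remain linearly independent on some neighborhood $U_S$ of $\xi_0$ in $S^*M$, which we may take inside $\pi^{-1}(W)$. On $U_S$ each Gram--Schmidt step is a composition of the smooth map $\xi \mapsto (G(\pi(\xi)), w, v_2, \dots, v_n)$ with the smooth operations of forming $G$-inner products, subtracting $G$-orthogonal projections, and dividing by $G$-norms (which are nonzero precisely by linear independence), so $F$ is $C^\infty$ on $U_S$ and each $F(\xi)$ is a $g$-orthonormal coframe over $\pi(\xi)$, whence $\pi(F(\xi)) = \pi(\xi)$. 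Finally, $\xi \in S^*M$ forces $G(\pi(\xi)) w \cdot w = 1$, so the first Gram--Schmidt step divides $w$ by its $G$-norm $1$ and leaves it fixed; this gives $\omega^1(\xi) = \xi$ and hence $F(\xi) \in FO_\xi(M)$.

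I do not expect a genuine obstacle here; the only points deserving a line of care are that $U_S$ can be arranged to sit inside a single coordinate chart and small enough to preserve linear independence (both by continuity), and that the orthonormalization must be carried out throughout with respect to the $z$-dependent inner product $G(z)$ rather than the Euclidean one, so that the output is honestly $g$-orthonormal. An equivalent coordinate-free variant is to pick any smooth local coframe $E_1, \dots, E_n$ near $z_0$ with $E_1(z_0)$ not $g$-orthogonal to $\xi_0$ and to orthonormalize $(\xi, E_2(\pi(\xi)), \dots, E_n(\pi(\xi)))$, but this offers no real advantage.
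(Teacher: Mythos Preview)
Your argument is correct. The Gram--Schmidt construction does exactly what is needed: linear independence of $(w,v_2,\dots,v_n)$ holds at $\xi_0$ and hence on a neighborhood by continuity, the process depends smoothly on the smoothly varying inner product $G(z)$ and on $w$, and the first output is $w$ itself since $\xi\in S^*M$ forces $G(z)w\cdot w=1$.

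Your route is genuinely different from the paper's. The paper fixes an orthonormal coframe $F_0$ at $z_0$ and builds $F(\xi)$ via parallel transport along radial geodesics composed with a rotation $O_0$ at $z_0$ that sends the first covector of $F_0$ to $P^{-1}\xi$; the rotation is specified as acting in the $2$-plane spanned by these two vectors and as the identity on the orthogonal complement. Your Gram--Schmidt argument is more elementary and local: it needs only a coordinate chart and the cometric matrix, and smoothness is immediate from the explicit rational formulas, with no appeal to the exponential map or to parallel transport. The paper's construction is more geometric and yields a frame that is, in a sense, canonically tied to the geodesic structure near $z_0$; on the other hand, the smoothness of the rotation $O_0(\omega)$ as $\omega$ approaches the first covector of $F_0$ requires a word of care that your approach avoids entirely. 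For the purposes of the lemma (only a \emph{local} smooth section with $\omega^1(\xi)=\xi$ is needed), your argument is the cleaner one.
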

\begin{proof}
 Let $\xi_0\in S_{z_0}^*M$, let $F_0\in FO^*_{z_0}(M)$ be an orthonormal coframe $(\omega^1,\ldots, \omega^n)$ of $T^*_{z_0}M$ and let $U$ be a neighborhood of $z_0$ where the exponential map is defined. For $\xi\in S^*U$, we define
 %$$
 %F(\xi)=P_{\exp^{-1}(\pi(\xi))}\circ O_0\circ P_{\exp^{-1}(\pi(\xi))}^{-1}(\xi).
 %$$
 $$
 F(\xi)=P\circ O_0\circ P^{-1}(\xi).
 $$
 Here $P$ is the parallel translation (either of a covector or a coframe) along a geodesic with initial data $\exp_{z_0}^{-1}(\pi(\xi))\in T_{z_0}^*M$ for a unit time.
 
 The mapping $O_0: S^*_{z_0}M\to FO^*_{z_0}(M)$ is a unique rotation of the fixed coframe $F_0$ defined as follows:
 Let $\omega\in S_{z_0}^*M$, and let $V_\omega$ be the plane spanned by $\omega$ and by the first covector $\omega^1$ of the  coframe $F_0$. We define $O_0(\omega)$ to be the rotation of the coframe $F_0$ so that $\omega^1$ is rotated to $\omega$ on the plane $V_\omega$ while directions initially orthogonal to $V_\omega$ remains orthogonal to $V_\omega$ under the rotation. % angle of the other frame vectors $\omega^2,\ldots,\omega^n$ to $V$ remains constant under the rotation.
 
 More precisely, we may split the cotangent space $T_{z_0}^*M$ as $V_\omega\oplus H_\omega$, where $\oplus$ stands for an orthogonal direct sum. Letting $(\omega^1,\omega)$ be a (not necessarily orthogonal) basis for $V_\omega$, and choosing some basis for $H_\omega$, we have a matrix representation for an element of $\mathcal{R}_\omega\in SO(n)$ as
 $$
 \left[\begin{array}{ccc}
	0 & -1 & \\
	1 & 0 &  \\
	 &  & I_H \\
	\end{array}\right].
 $$
 Now $\mathcal{R}_\omega$ induces a rotation of the frame $F_0$, which we define to be $O_0(\omega)$. The first component of this frame satisfies
 $$
 [O_0(\omega)]^1=\omega, \mbox{ i.e. } \omega^1\mapsto \omega.
 $$
 Since $V_\omega$ and its orthogonal complement $H_\omega$ depend smoothly on $\omega$ and $\mathcal{R}_\omega$ is independent of the basis of $H_\omega$, we have that $O_0$ is well defined and depends smoothly on $\omega$.
 
 All the steps in the composition defining $F(\xi)$ are smooth, and thus $F(\xi)$ depends smoothly on $\xi\in S^*U$. We define $U_S=S^*U$.
\end{proof}

%\nocite{*}
%\addcontentsline{toc}{chapter}{Bibliography}
%\bibliography{rough_dirac}
\bibliographystyle{alpha}

\providecommand{\bysame}{\leavevmode\hbox to3em{\hrulefill}\thinspace}
\providecommand{\href}[2]{#2}

\end{document}